\documentclass[11pt,a4paper,twoside,reqno]{amsart}

\usepackage[T1]{fontenc}
\usepackage[utf8]{inputenc}
\usepackage{lmodern}
\usepackage{microtype}
\usepackage{amsmath,amssymb,amsthm,mathtools}
\usepackage[a4paper,margin=30mm]{geometry}
\usepackage{cite}
\usepackage[hidelinks,hypertexnames=false]{hyperref}
\hypersetup{
 pdftitle={Sharp Sobolev inequalities on minimal immersions via optimal transport},
 pdfauthor={Zhe-Feng Xu and Zhuo-Nan Zhu},
 pdfsubject={Sharp Sobolev, Gagliardo-Nirenberg and dual transport inequalities},
 pdfkeywords={Sobolev inequality, minimal immersion, optimal transport}
}

\numberwithin{equation}{section}
\newtheorem{theorem}{Theorem}[section]
\newtheorem{proposition}[theorem]{Proposition}
\newtheorem{corollary}[theorem]{Corollary}
\newtheorem{lemma}[theorem]{Lemma}
\theoremstyle{remark}
\newtheorem{remark}[theorem]{Remark}

\newcommand{\R}{\mathbb R}
\newcommand{\dd}{\,\mathrm d}
\newcommand{\vol}{\mathrm{vol}}
\newcommand{\AT}{\operatorname{AT}}
\newcommand{\II}{\mathrm{II}}
\newcommand{\tr}{\operatorname{tr}}
\newcommand{\supp}{\operatorname{supp}}
\newcommand{\ac}{\mathrm{ac}}
\newcommand{\JE}{J_{\mathrm E}}
\newcommand{\ind}{\mathbf 1}
\newcommand{\Euc}{\mathrm{Euc}}

\makeatletter

\renewcommand{\@maketitle}{%
  \normalfont\normalsize
 
  \begingroup
    \let\@date\@empty
    \@adminfootnotes
  \endgroup
  \begingroup
    \centering
    {\LARGE\mdseries\@title\par}
    \vskip 1.6em
    {\Large
      \def\and{\unskip,\space}%
      \authors\par}
    \vskip 1em
    {\Large\@date\par}
  \endgroup
  \vskip 3em
  \@setabstract
  \vskip 1.2em
}

\renewenvironment{abstract}{%
  \global\setbox\abstractbox=\vtop\bgroup
    \normalfont\normalsize
    \hsize=\textwidth
    \linewidth=\hsize
    \parindent=0pt
    \noindent\textbf{Abstract}\enspace\ignorespaces
}{%
    \par
  \egroup
}

\renewcommand{\@setabstract}{%
  \ifvoid\abstractbox\else
    \box\abstractbox
  \fi
}

\makeatother

\title[Sharp Sobolev inequalities on minimal immersions]
{Sharp Sobolev inequalities on minimal immersions\\
via optimal transport}

\author[Zhe-Feng Xu]{Zhe-Feng Xu\textsuperscript{1,\,2}}
\author[Zhuo-Nan Zhu]{Zhuo-Nan Zhu\textsuperscript{1}}

\date{\today}

\begin{document}

\begin{abstract}
We prove that, for $2\leq p<n$, the optimal $L^p$-Sobolev constant
on every complete $n$-dimensional Euclidean minimal immersion
without boundary is the Euclidean Aubin--Talenti constant.
The proof combines optimal transport to a suitably chosen target
with a two-kernel comparison linking the tangential transport moment to deficits in the normal-fibre and Jacobian estimates.
\end{abstract}

\maketitle

\footnotetext[1]{School of Mathematical Sciences,
University of Science and Technology of China,
230026 Hefei, China.
xzf1998@mail.ustc.edu.cn, zhuonanzhu@mail.ustc.edu.cn}

\footnotetext[2]{SISSA, 34136 Trieste, Italy.
zxu@sissa.it}

\setcounter{footnote}{2}

\noindent\textbf{Keywords:}
Sharp Sobolev inequality, minimal immersion, optimal transport,
Gagliardo--Nirenberg inequality, Beckmann problem.

\medskip

\noindent\textbf{MSC2020:} 46E35, 49Q22, 53C42, 26D10.

\bigskip

\section{Introduction}

Let $n\geq2$ and $1<p<n$. We write $q=p'=p/(p-1)$ and
$p^*=np/(n-p)$ for the conjugate and critical Sobolev exponents,
respectively. The classical Euclidean Sobolev inequality states that
\begin{equation*}
 \left(\int_{\R^n}|f|^{p^*}\dd x\right)^{1/p^*}
 \leq \AT(n,p)
 \left(\int_{\R^n}|\nabla f|^p\dd x\right)^{1/p},
 \qquad f\in C_c^\infty(\R^n),
\end{equation*}
where the best constant, determined by Aubin~\cite{Aubin} and
Talenti~\cite{Talenti}, is
\begin{equation}\label{AT}
 \AT(n,p)=\pi^{-1/2}n^{-1/p}
 \left(\frac{p-1}{n-p}\right)^{1/q}
 \left[
 \frac{\Gamma(n/2+1)\Gamma(n)}
 {\Gamma(n/p)\Gamma(n/q+1)}
 \right]^{1/n}.
\end{equation}
The endpoint $p=1$ is the functional form of the Euclidean
isoperimetric inequality, with constant $(n\omega_n^{1/n})^{-1}$,
where $\omega_n=|B_1^n|$. In this paper, we study whether the constant in
\eqref{AT} remains optimal for the Sobolev inequality on minimal
submanifolds of Euclidean space.

Early Sobolev inequalities on minimal hypersurfaces were obtained
by Miranda~\cite{MirandaSobolev} and by Bombieri, De Giorgi and
Miranda~\cite{BDGM}.  Allard~\cite{Allard}
and Michael--Simon~\cite{MichaelSimon} established inequalities
on general Euclidean submanifolds involving their mean curvature.
More precisely, let $m\geq1$ and let $\Sigma\subset\R^{n+m}$ be a complete
$n$-dimensional submanifold without boundary. We denote its induced
metric by $g$, its Riemannian volume by $\vol_\Sigma$, and its
unnormalized mean curvature vector by $H$. We write $\nabla^\Sigma$
and $\Delta_\Sigma=\operatorname{div}_{\Sigma}\nabla^\Sigma$ for the
gradient and Laplace--Beltrami operator. Then
\begin{equation}\label{MSSI}
    \left(\int_\Sigma |f|^{\frac{n}{n-1}}\dd\vol_\Sigma\right)^{(n-1)/n}
 \leq \frac{4^{n+1}}{\omega_n^{1/n}}
 \int_\Sigma\bigl(|\nabla^\Sigma f|+|H||f|\bigr)\dd\vol_\Sigma,
 \qquad f\in C_c^\infty(\Sigma).
\end{equation}
For a direct proof in codimension one and related universal
Hardy--Sobolev inequalities, see Cabr\'e and
Miraglio~\cite{CabreMiraglio}. Hoffman and Spruck~\cite{HoffmanSpruck} extended
mean-curvature Sobolev inequalities to immersed submanifolds
of Riemannian manifolds under suitable geometric and
support assumptions. Castillon~\cite{Castillon} used optimal transport to establish
sharp weighted Sobolev inequalities and a nonsharp unweighted
Michael--Simon type inequality on Euclidean submanifolds.
For minimal submanifolds, the mean curvature term in \eqref{MSSI} vanishes. A natural question is
whether its constant can be replaced by the Euclidean best constant.

Brendle~\cite{BrendleIsoperimetric} proved a Michael--Simon--Sobolev
inequality in arbitrary codimension by the
Alexandrov--Bakelman--Pucci (ABP) method, with a codimension-dependent
constant that is sharp in codimensions one and two.
Brendle and Eichmair~\cite{BETransport} gave an optimal transport
proof of its isoperimetric form; see also~\cite{BENotices}.
For minimal submanifolds in these codimensions, the coarea
formula and decreasing rearrangement yield the sharp
$L^p$-Sobolev constant $\AT(n,p)$ for every $1<p<n$;
see~\cite[Theorem~5.8]{BrendleSurvey} for the hypersurface
case and~\cite{AldrigoBalogh} for the rearrangement approach. For a refinement of Brendle's inequality in codimension at least
three, see Sun~\cite{Sun}. 
In arbitrary codimension, Balogh, Krist\'aly and
Mester~\cite{BKM} used a normal-bundle formulation of optimal
transport to obtain $L^p$-Sobolev inequalities on minimal
submanifolds with a codimension-independent constant $S(n,p)$
for $2\leq p<n$, satisfying
\[
 S(n,p)>\AT(n,p),\qquad
 \lim_{n\to\infty}\frac{S(n,p)}{\AT(n,p)}=1
\]
for every fixed $p\geq2$. 
Their estimate for $1<p<2$ remains codimension-dependent.
The loss of sharpness comes from the estimate of the tangential
transport moment; see~\cite[Remark~2.1]{BKM}.
In the anisotropic setting, Han and Xu~\cite{HanXu} obtained
sharp anisotropic
$L^p$-Sobolev inequalities for minimal submanifolds in codimensions
three and four, for $1<p<n$ and ellipsoidal anisotropies.
Their proof combines an inverse optimal transport construction
for the ABP map, a sharp normal-fibre estimate, and convex
symmetrization. 

Related optimal transport methods have also been used in the
logarithmic Sobolev setting. Cordero-Erausquin
~\cite{CorderoGaussian} gave a direct optimal transport proof of
the Gaussian logarithmic Sobolev inequality. Balogh, Don and
Krist\'aly~\cite{BDKLog} established sharp weighted $L^p$-logarithmic Sobolev inequalities for log-concave homogeneous
weights on open convex cones and characterized the equality
cases. In the submanifold setting, Brendle~\cite{BrendleLog}
established the sharp $L^2$-logarithmic Sobolev inequality in
arbitrary codimension, while Balogh and Krist\'aly~\cite{BK}
subsequently gave an optimal transport treatment and established rigidity. 

Motivated by these results, a natural question arises: can one obtain
the Euclidean sharp $L^p$-Sobolev inequality in arbitrary codimension?
The purpose of this paper is to answer this question for minimal
immersions when $2\leq p<n$. Our main theorem is as follows:
\begin{theorem}\label{Sob}
Let $n\geq3$, $m\geq1$, and $2\leq p<n$. Suppose that $X:(\Sigma,g)\to\R^{n+m}$ is a complete $n$-dimensional minimal isometric immersion without boundary. Then
\begin{equation}\label{SobI}
 \left(\int_\Sigma |f|^{p^*}\dd\vol_\Sigma\right)^{1/p^*}
 \leq \AT(n,p)
 \left(\int_\Sigma |\nabla^\Sigma f|^p\dd\vol_\Sigma\right)^{1/p},
 \qquad f\in C_c^\infty(\Sigma).
\end{equation}
The constant $\AT(n,p)$ is optimal on each fixed
$\Sigma$. Moreover, no nonzero function in $C_c^\infty(\Sigma)$ attains equality.
\end{theorem}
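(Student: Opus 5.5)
We follow the normal-bundle optimal transport scheme of Balogh--Krist\'aly--Mester~\cite{BKM}, modified so that the tangential transport moment is no longer estimated crudely. By homogeneity, fix a nonnegative $f\in C_c^\infty(\Sigma)$ with $\int_\Sigma f^{p^*}\dd\vol_\Sigma=1$. Let $\mu=f^{p^*}\vol_\Sigma$, viewed as a measure in $\R^{n+m}$ via $X$. Let $\nu$ be an Aubin--Talenti type profile, chosen not on $\R^n$ but on $\R^{n+m}$. Concretely, take $\nu=h(y)\dd y$ with a radial density $h$ whose $n$-dimensional marginal structure reproduces the extremal $(1+|x|^q)^{-n/p}$ after integrating out $m$ normal directions.

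Brenier--McCann then gives a convex potential $\varphi$ on $\R^{n+m}$ with $(\nabla\varphi)_\#\mu=\nu$. Since $\mu$ lives on an $n$-dimensional set, the map must be lifted to the normal bundle, $\Phi(x,\xi)=\nabla\varphi(x)+\xi$ with $\xi\in N_x\Sigma$ in a suitable fibre. The Jacobian of $\Phi$ is $\det(D^2_\Sigma\varphi-\langle\II,\xi\rangle)$. Minimality gives $\operatorname{tr}\langle\II,\xi\rangle=0$, which is what makes an AM--GM Jacobian bound possible.

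The key steps, in order, are these.

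\emph{Step 1.} Write the Monge--Amp\`ere type identity. Comparing the mass of $\nu$ over the image of normal fibres with $f^{p^*}$ gives
\[
f^{p^*}(x)\le \int_{N_x} h(\nabla\varphi+\xi)\det(D^2_\Sigma\varphi-\langle \II,\xi\rangle)\dd\xi
\]
for $\mu$-a.e.\ $x$, using Aleksandrov second derivatives.

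\emph{Step 2.} Apply AM--GM to the determinant together with $\tr\langle\II,\xi\rangle=0$, which bounds it by $(\Delta_\Sigma\varphi/n)^n$. Then integrate out the normal fibre. Here the choice of $h$ should make this fibre integral produce exactly the $n$-dimensional Talenti density evaluated at $|\nabla^\Sigma\varphi|$, plus a nonnegative \emph{fibre deficit}.

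\emph{Step 3.} Raise to the power $-1/n$ and integrate against $\mu$, following Cordero-Erausquin--Nazaret--Villani. Integrate by parts on $\Sigma$: this is valid since $\Sigma$ is complete without boundary and $f$ has compact support. Then apply H\"older with exponents $p,q$. This produces
\[
\int f^{p^*(1-1/n)}\Delta_\Sigma\varphi\le p^*\tfrac{n-1}{n}\Bigl(\int|\nabla^\Sigma f|^p\Bigr)^{1/p}\Bigl(\int f^{p^*}|\nabla^\Sigma\varphi|^q\Bigr)^{1/q}.
\]

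\emph{Step 4.} Estimate the tangential moment $\int f^{p^*}|\nabla^\Sigma\varphi|^q$. It is bounded by the full moment $\int|y|^q\dd\nu$, which is too large; this is exactly the loss in~\cite{BKM}. We instead need
\[
\int f^{p^*}|\nabla^\Sigma\varphi|^q\le \int_{\R^n}|x|^q\,\dd\nu_{\Euc} \;+\; C\cdot(\text{fibre and Jacobian deficits}).
\]

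\emph{Step 5.} Let the deficits from Steps 2 and 4 cancel in the final inequality. Optimizing over the scaling of $\nu$ then gives $\AT(n,p)$.

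The main obstacle is Step 4, the ``two-kernel comparison''. We would compare two radial kernels on the fibre: the density $h$ actually integrated over $N_x$, and the Euclidean Talenti kernel. The normal component $|\nabla^\perp\varphi|$ is then controlled by how much mass the fibre integral loses relative to the Euclidean kernel. For $p\ge2$, i.e.\ $q\le2$, the function $t\mapsto t^{q/2}$ is concave. Hence $|\nabla\varphi|^q\ge|\nabla^\Sigma\varphi|^q+c|\nabla^\perp\varphi|^q$-type splittings go in the right direction, and this is where the restriction $p\ge2$ enters.

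Optimality on a fixed $\Sigma$ follows by concentrating a rescaled Talenti bubble near a point. Blow-up there converges to the tangent plane $\R^n$ because the immersion is smooth.

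Non-attainment: equality would force equality in the AM--GM step, so $D^2_\Sigma\varphi-\langle\II,\xi\rangle$ would be a multiple of the identity for all $\xi$ in a fibre of positive measure. This forces $\II\equiv0$ on $\supp f$, so $\Sigma$ is flat there. Equality in H\"older and integration by parts would then make $f$ a Talenti bubble, which is not compactly supported. This is a contradiction.
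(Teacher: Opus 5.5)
Your Step~4 is the decisive step, and as formulated it points in the wrong direction. Take a target whose $n$-plane marginals are $g_{n,p}=c_{n,p}^{-1}(1+|z|^q)^{-n}$. In Step~3 you integrate $f^{p^*}g_{n,p}(\nabla^\Sigma u)^{-1/n}$ (corrected by the fibre deficit), and $g_{n,p}^{-1/n}=c_{n,p}^{1/n}(1+|z|^q)$. So the tangential moment $J=\int_\Sigma f^{p^*}|\nabla^\Sigma u|^q$ appears on \emph{both} sides. As in \eqref{En}, ignoring truncation and mean curvature, you only get
\[
 1+J+\delta\leq\frac{1+\JE}{n}\,c_{n,p}^{-1/n}\,\|\nabla^\Sigma f\|_{L^p}\,J^{1/q},\qquad \delta\geq0 .
\]
The function $J\mapsto J^{1/q}/(1+J)$ increases up to $J=p-1<\JE$ and decreases afterwards, so a large tangential moment is harmless. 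An upper bound $J\leq\JE+C\cdot(\text{deficits})$, however precise, does not exclude $J=p-1$ with $\delta=0$, and minimizing over $J$ then gives only a nonsharp constant. What you need is the lower bound $\JE-J\leq n\delta$, with exactly the constant $n$. The tangent line of $t^{1/q}$ at $\JE$, together with $(1+\JE)/(q\JE)=1-1/n$, then gives $J^{1/q}/(1+J+\delta)\leq\JE^{1/q}/(1+\JE)$ (Lemma~\ref{Sc}).

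The paper proves this lower bound with a second radial kernel $\mathcal K$ whose $n$-plane marginals are $\frac{1+|z|^q}{n-1}g_{n,p}$. Its mass $(1+\JE)/(n-1)$ is computed through the transport by the area formula. The fibrewise comparison of $\int h\mathcal K$ with $\int h\mathcal G$ for $0\leq h\leq1$ (Lemma~\ref{Fib}) bounds that mass by $(1+J+n\delta)/(n-1)$, after normalizing by $Z_L$ and up to a truncation error $\eta_L\to0$. Your proposal contains no such mechanism, and ``let the deficits cancel'' does not replace it.

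Several further points need correction.
\begin{itemize}
\item Your explanation of $p\geq2$ is reversed. For $q<2$, concavity of $t\mapsto t^{q/2}$ makes it subadditive, so $|y|^q\leq|y^T|^q+|y^\perp|^q$. The splitting $|y|^q\geq|y^T|^q+c|y^\perp|^q$ fails for every $c>0$ once $|y^\perp|$ is small.
\item In the paper, $p\geq2$ is what lets the radial kernels on $\R^N$ with prescribed $n$-plane marginals be built at all, via the positive Laplace (Bernstein) representation of $e^{-\lambda s^{q/2}}$. You assert that such an $h$ exists but never construct it. Up to normalization, its marginal must be $(1+|z|^q)^{-n}$, not $(1+|z|^q)^{-n/p}$.
\item Pushing $\mu$ forward by a possibly non-injective immersion does not give a transport on the normal bundle of $\Sigma$. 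The paper first proves a truncated estimate with a mean-curvature term on embedded submanifolds; the target $Z_L^{-1}\ind_{B_L^N}\mathcal G$ forces $|v|\leq L$, which controls $\langle H,v\rangle$. It then applies this to the non-minimal embeddings $(X,\varepsilon\Psi)$, for which $|H_\varepsilon|=O(\varepsilon)$ on compact sets.
\item The inequality is reached only in the limits $L\to\infty$ and $\varepsilon\to0$, so there is no equality case of AM--GM or H\"older to analyse. Non-attainment instead follows because a compactly supported extremal would satisfy the Euler--Lagrange equation \eqref{EL}. Testing it with $\eta\equiv1$ near the support gives $0=\lambda\int u^{p^*-1}>0$.
\end{itemize}
Your optimality argument by concentration at a point is fine.
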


Theorem~\ref{Sob} also determines the best constants in
two related classes of inequalities. We first consider
Gagliardo--Nirenberg inequalities. The sharp Euclidean constants in the inequalities below were
determined by Del Pino and Dolbeault~\cite{DPD}; 
an optimal transport proof was given in~\cite{CNV}. On minimal
submanifolds, rearrangement gives the same constants whenever the
sharp isoperimetric inequality is available; see~\cite{AldrigoBalogh}. Applying Theorem~\ref{Sob} to products with Euclidean spaces, we obtain the following sharp Gagliardo--Nirenberg inequalities in arbitrary codimension.

\begin{corollary}\label{GN}
Let $n\geq2$ and $m\geq1$. Suppose that
$X:(\Sigma,g)\to\R^{n+m}$ is a complete $n$-dimensional
minimal isometric immersion without boundary.
For each integer $k\geq1$, put
\begin{equation}\label{GNExp}
 d=n+k,\qquad t=\frac{2n+k}{2n+k-4},\qquad
 \theta=\frac{n}{t(d-2)}.
\end{equation}
Then
\begin{equation}\label{GNI}
 \|f\|_{L^{2t}(\Sigma)}
 \leq G_{n,t}\|\nabla^\Sigma f\|_{L^2(\Sigma)}^\theta
                 \|f\|_{L^{t+1}(\Sigma)}^{1-\theta},
 \qquad f\in C_c^\infty(\Sigma),
\end{equation}
where $G_{n,t}$ is the Euclidean best constant. With
$a=1/(t-1)$ and $b=2a+1$, it is given by
\begin{equation*}
 G_{n,t}
 =\left(\frac{b-n/2}{b}\right)^{1/(2t)}
  \left(\frac{b}{2na^2}\right)^{\theta/2}
  \left[\frac{\Gamma(b)}{\pi^{n/2}\Gamma(b-n/2)}\right]^{\theta/n}.
\end{equation*}
The constant is optimal on each fixed $\Sigma$.
\end{corollary}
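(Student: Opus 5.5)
The plan is to reduce Corollary~\ref{GN} to Theorem~\ref{Sob} on the product immersion $\Sigma\times\R^k\to\R^{n+m}\times\R^k=\R^{d+m}$, which is again a complete minimal isometric immersion without boundary, now of dimension $d=n+k\geq3$. We take $p=2<d$ in Theorem~\ref{Sob}. This gives, for every $F\in C_c^\infty(\Sigma\times\R^k)$,
\[
\|F\|_{L^{2^*}(\Sigma\times\R^k)}\leq \AT(d,2)\,\|\nabla F\|_{L^2(\Sigma\times\R^k)},\qquad 2^*=\frac{2d}{d-2}.
\]
The test functions will be the Del Pino--Dolbeault-type ``lifts'' used in the Euclidean derivation of Gagliardo--Nirenberg from Sobolev in one higher dimension (the Bakry--Gentil--Ledoux / Agueh mechanism). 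Concretely, for $f\geq0$ in $C_c^\infty(\Sigma)$ and $y\in\R^k$ we set
\[
F(x,y)=\bigl(f(x)^{-\alpha}+|y|^2\bigr)^{-\beta},
\]
with $\alpha,\beta>0$ chosen so that $k,t$ are related by \eqref{GNExp}. The requirement is that integrating out $y$ (a Beta integral) converts $\|F\|_{2^*}^{2^*}$, $\|\partial_yF\|_2^2$ and $\|\nabla_xF\|_2^2$ into constant multiples of $\int f^{2t}$, $\int f^{t+1}$ and $\int|\nabla^\Sigma f|^2$ respectively. Since $F$ is not compactly supported in $y$, we justify its use by density, via cutoffs in $y$ and the decay of $F$, after first reducing to $|f|$ and smoothing (replacing $f$ by $(f^2+\varepsilon^2)^{1/2}-\varepsilon$).

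The key steps, in order, are as follows.
\begin{enumerate}
\item Check that the product is minimal, complete and isometrically immersed.
\item Fix the exponents and compute the three $y$-integrals explicitly. In the Euclidean case these are the same computations, and the algebra is local in $x$. It therefore transfers verbatim, because $|\nabla F|^2=|\nabla_x^\Sigma F|^2+|\partial_yF|^2$ on the product.
\item Obtain an inequality of the form $A\bigl(\int f^{2t}\bigr)^{\gamma}\leq B\int|\nabla^\Sigma f|^2+C\int f^{t+1}$.
\item Apply it to the dilates $f_\lambda=\lambda^{s}f$, or to the combination in which the $x$- and $y$-parts scale differently, and optimize over $\lambda$. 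This yields the multiplicative form \eqref{GNI}.
\item Identify the constant. Since the Beta-function identities and the optimization are exactly those of the Euclidean case $\Sigma=\R^n$, where the Del Pino--Dolbeault extremals $(1+|x|^2)^{-a}$ lift to the Aubin--Talenti bubbles in $\R^d$, the resulting constant is $G_{n,t}$. This gives \eqref{GNI} for every $\Sigma$.
\end{enumerate}

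Optimality on a fixed $\Sigma$ comes from blow-down at a point. Take $x_0\in\Sigma$ and rescale the Del Pino--Dolbeault profile $h_\varepsilon(x)=h(\exp_{x_0}^{-1}(x)/\varepsilon)$, truncated. As $\varepsilon\to0$, the metric in normal coordinates tends to Euclidean, and the three norms converge to their Euclidean values up to a factor $1+o(1)$. The ratio in \eqref{GNI} is scale invariant by the choice of $\theta$, so it tends to $G_{n,t}$.

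I expect the main obstacle to be the bookkeeping in step~2. One must verify that the exponents $t=(2n+k)/(2n+k-4)$ and $\theta$ are exactly those for which the lifted $F$ has $2^*$-integrability matching $f^{2t}$ and $y$-gradient matching $f^{t+1}$. One must also verify that the Sobolev optimality of the bubble on $\R^d$ translates into optimality of $G_{n,t}$ rather than only an upper bound. If the explicit power ansatz proves awkward, I would instead argue abstractly. The inequality derived from Theorem~\ref{Sob} for lifted functions is identical, term by term, to the Euclidean one on $\R^n\times\R^k$. The optimal constant obtained after optimizing is therefore the Euclidean $G_{n,t}$ known from~\cite{DPD}, since equality in the Euclidean case is achieved through the lifts.
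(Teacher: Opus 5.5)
Your proposal matches the paper's proof: lift $f$ to $F_\lambda=(\lambda f^{-1/a}+|y|^2)^{-(d-2)/2}$ on the product $\Sigma\times\R^k$, apply Theorem~\ref{Sob} with $p=2$ in dimension $d$, and optimize in $\lambda$, which is exactly your amplitude scaling since $(\lambda^{-a}f)^{-1/a}=\lambda f^{-1/a}$. You then identify the constant through the Euclidean lift of $U=(1+|x|^2)^{-a}$ to the bubble on $\R^d$, and prove optimality by cut-off plus normal-coordinate rescaling, exactly as the paper does; the exponents left open in your ansatz are $\alpha=1/a=t-1$ and $\beta=(d-2)/2$, and the inequality $\alpha\beta=(d-2)/(2a)>1$ is what makes the lift $C^1$ across $\{f=0\}$.
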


The proof applies Theorem~\ref{Sob} with $p=2$ to the
product immersion $(x,y)\mapsto(X(x),y)$ of $\Sigma\times\R^k$.
Since $n+k\geq3$, this also covers $n=2$. The values of $t$ in \eqref{GNExp} form a discrete sequence decreasing to $1$ as $k\to\infty$. The proof follows the Euclidean dimension reduction argument in \cite[Section 3]{Nguyen}.

The second consequence concerns Sobolev duality and the Beckmann
problem. For $q>1$, define
\begin{equation}\label{Beck}
 \mathcal B_q(h)=\inf\left\{
 \frac1q\int_\Sigma|Z|^q\dd\vol_\Sigma :
 Z\in L^q(T\Sigma),\ -\operatorname{div}_{\Sigma} Z=h\right\}.
\end{equation}
The divergence constraint is understood in the sense of distributions, and no additional boundary condition at infinity is imposed. We refer to this as the
homogeneous Beckmann problem with power cost; see
\cite{Beckmann,BrascoPetrache} for continuous transport formulations.

\begin{corollary}\label{Dual}
Let $n\geq3$, $m\geq1$, and $2\leq p<n$, and set
$r=nq/(n+q)=(p^*)'$.
Suppose that $X:(\Sigma,g)\to\R^{n+m}$ is a complete
$n$-dimensional minimal isometric immersion without boundary.
Then, for every $h\in L^r(\Sigma)$, the infimum in
\eqref{Beck} is attained by a unique vector field
$Z_h\in L^q(T\Sigma)$, and
\begin{equation}\label{BI}
 \mathcal B_q(h)\leq\frac{\AT(n,p)^q}{q}
 \|h\|_{L^r(\Sigma)}^q.
\end{equation}
The constant is optimal on each fixed $\Sigma$, even when
$h=\rho-\eta$, where $\eta\in C_c^\infty(\Sigma)$ is a fixed
probability density and $\rho$ varies over such densities. 

For $p=2$, the minimal Green kernel $G_\Sigma$ of $-\Delta_\Sigma$ exists,
and for all $f,h\in L^{2n/(n+2)}(\Sigma)$,
\begin{equation}\label{HLS}
 \left|\int_\Sigma\int_\Sigma G_\Sigma(x,y)f(x)h(y)
            \dd\vol_\Sigma(x)\dd\vol_\Sigma(y)\right|
 \leq\AT(n,2)^2\|f\|_{L^{2n/(n+2)}(\Sigma)}
                  \|h\|_{L^{2n/(n+2)}(\Sigma)}.
\end{equation}
The constant is optimal on each fixed $\Sigma$.
\end{corollary}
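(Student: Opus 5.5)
The proof is by duality, with Theorem~\ref{Sob} as the only geometric input. Everything else is functional analysis in $L^q(T\Sigma)$.

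\emph{Existence, uniqueness and \eqref{BI}.} Fix $h\in L^r(\Sigma)$. Theorem~\ref{Sob} shows that $f\mapsto\int_\Sigma fh\dd\vol_\Sigma$ is bounded with respect to $\|\nabla^\Sigma f\|_{L^p}$:
\[
 \Bigl|\int_\Sigma fh\Bigr|\leq \AT(n,p)\|h\|_{L^r}\|\nabla^\Sigma f\|_{L^p},
 \qquad f\in C_c^\infty(\Sigma).
\]
Let $E$ be the closure of $\{\nabla^\Sigma f: f\in C_c^\infty\}$ in $L^p(T\Sigma)$. The functional $\nabla^\Sigma f\mapsto\int fh$ is well defined on this set, since the same bound shows that $\nabla^\Sigma f=0$ forces $\int fh=0$. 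It extends to $E$, and by Hahn--Banach to all of $L^p(T\Sigma)$, with norm at most $\AT(n,p)\|h\|_{L^r}$. Riesz representation then gives $Z\in L^q(T\Sigma)$ with $\int\langle Z,\nabla^\Sigma f\rangle=\int fh$ for every test $f$. This is exactly $-\operatorname{div}_\Sigma Z=h$ in distributions, with $\|Z\|_{L^q}\le\AT(n,p)\|h\|_{L^r}$, so \eqref{BI} follows. The admissible set is a nonempty closed affine subspace of the uniformly convex space $L^q$, and $|Z|^q$ is strictly convex. Hence a unique minimizer $Z_h$ exists. In fact the infimum equals $\frac1q\|\Lambda_h\|_{E^*}^q$, where $\Lambda_h$ is the functional above: the minimizers are the norm-minimal Hahn--Banach extensions.

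\emph{Optimality of \eqref{BI}.} The duality identity reverses the argument. For any admissible $Z$ and any test $f$,
\[
 \int fh=\int\langle Z,\nabla^\Sigma f\rangle\le \|Z\|_q\|\nabla^\Sigma f\|_p .
\]
So if \eqref{BI} held with a constant $C<\AT(n,p)$, we would get $\int fh\le C\|h\|_r\|\nabla^\Sigma f\|_p$ for all $f,h$. Taking $h=|f|^{p^*-2}f$, so that $\|h\|_r=\|f\|_{p^*}^{p^*-1}$, gives Sobolev with constant $C$, contradicting the optimality part of Theorem~\ref{Sob}.

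\emph{The restricted class $h=\rho-\eta$.} This is the step I expect to be the main obstacle. Theorem~\ref{Sob} guarantees near-extremals $f_j$, and I would take them nonnegative and concentrating at a point far from $\supp\eta$, as in the optimality proof. Set $\rho_j=f_j^{p^*-1}/\|f_j^{p^*-1}\|_1$. The task is then to show that the $L^r$ norm of $\rho_j-\eta$ is dominated by that of $\rho_j$, and that the cross term against $\eta$ is negligible. The natural device is scaling: concentrating $\rho_j$ makes $\|\rho_j\|_r\to\infty$ because $r>1$, while $\eta$ contributes $O(1)$ both to $\|\rho_j-\eta\|_r$ and to $\int f_j\eta$ after normalization. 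One has to check that, after normalizing $f_j$ so that $\|\nabla f_j\|_p=1$, the error $\int f_j\eta$ is $o(\|\rho_j\|_r\cdot\text{normalization})$. This uses $f_j\to0$ locally uniformly away from the concentration point.

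\emph{The case $p=2$.} Here Theorem~\ref{Sob} with $p=2$ gives $\int f^2\lesssim\int|\nabla f|^2$ locally in the $L^{2n/(n-2)}$ sense. This implies that $\Sigma$ is nonparabolic, so the minimal Green kernel $G_\Sigma$ exists as the increasing limit of Dirichlet Green kernels on an exhaustion. For $h\in C_c^\infty$, the function $u=\int G_\Sigma(\cdot,y)h(y)$ lies in the homogeneous energy space and solves $-\Delta_\Sigma u=h$. Moreover $Z_h=\nabla u$: it is admissible, and the first-order condition holds because $\nabla u\in E$. Consequently
\[
 \iint G_\Sigma h h=\int|\nabla u|^2=\|Z_h\|_2^2\le \AT(n,2)^2\|h\|_{2n/(n+2)}^2 .
\]
Since $G_\Sigma\ge0$ is symmetric, the bilinear form is a positive semidefinite inner product. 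Cauchy--Schwarz then yields \eqref{HLS} for test functions, and density extends it to $L^{2n/(n+2)}$. For optimality, take $f=h$: the optimality already shown for \eqref{BI} at $q=2$ transfers directly, because $\iint G_\Sigma hh=2\mathcal B_2(h)$.
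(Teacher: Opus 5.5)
Your proposal is correct and follows essentially the paper's route. It uses Hahn--Banach duality against Theorem~\ref{Sob}, concentration of near-extremals for the fixed-$\eta$ class, nonparabolicity from the Sobolev inequality, and identification of $Z_h$ with the gradient of the Green potential. For the fixed-$\eta$ class, your $O(1)$ bound, e.g.\ $\int f_j\eta\le\|f_j\|_{p^*}\|\eta\|_r$, together with $\|\rho_j\|_r\to\infty$, suffices once $f_j^{p^*-1}$ is mollified into a smooth density.

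The main step you assert without proof is that $\nabla^\Sigma(G_\Sigma h)\in E$, which is what singles out the minimal kernel. The paper obtains it by exhausting $\Sigma$ by domains $\Omega_j$, noting that the Dirichlet potentials of $h$ are the orthogonal projections onto $H_0^1(\Omega_j)$ of the Riesz representative of $h$ in $\mathcal D^{1,2}(\Sigma)$ (hence converge in energy), and using that the Dirichlet kernels increase to $G_\Sigma$. Your final density step to $L^{2n/(n+2)}$ is also only asserted; as in the paper, it needs positivity of $G_\Sigma$ and Fatou's lemma to make the double integral absolutely convergent.
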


On an affine $n$-plane,
\[
 G_{\R^n}(x,y)=\frac{|x-y|^{2-n}}{n(n-2)\omega_n}.
\]
Thus \eqref{HLS} is the Newtonian case of the sharp
Hardy--Littlewood--Sobolev inequality~\cite{LiebHLS}.
Here $G_\Sigma$ is the intrinsic Green kernel, not a power of the
ambient distance. Both statements in Corollary~\ref{Dual}
follow by duality from Theorem~\ref{Sob}.

The proof of Theorem~\ref{Sob} follows the optimal transport argument of
Cordero-Erausquin, Nazaret and Villani~\cite{CNV}. The transport
on submanifolds is described by an integral Monge--Amp\`ere
identity; we use the results of Balogh and Krist\'aly~\cite{BK}
and Wang~\cite{Wang}. For a general treatment of transport between
unequal dimensions, see McCann and Pass~\cite{McCannPass}.

For embedded submanifolds, our proof refines the optimal transport argument of Balogh,
Krist\'aly and Mester~\cite{BKM} by improving the nonsharp estimate
discussed in their Remark~2.1. The key point is to relate the
tangential transport moment to the deficits in the normal-fibre
and Jacobian estimates, rather than estimate it independently
of these deficits. To be more precise, we construct two positive radial kernels whose marginals on every
$n$-dimensional linear subspace are
\[
g_{n,p}(z)=
\frac{(1+|z|^q)^{-n}}
{\displaystyle\int_{\mathbb R^n}(1+|w|^q)^{-n}\,\mathrm dw}
\qquad\text{and}\qquad
\frac{1+|z|^q}{n-1}g_{n,p}(z),
\]
respectively. The construction of these kernels relies on a positive Laplace representation and requires $p\ge2$. Comparing the two kernels on each restricted normal fibre relates the tangential transport moment to the loss in the
transport estimate. Together with a scalar concavity inequality,
this relation yields the sharp constant $\AT(n,p)$ in the
embedded case, without requiring the tangential moment to equal
its Euclidean value. The extension to arbitrary minimal immersions
then follows by approximation.

The paper is organized as follows. In
Section~\ref{OT}, we recall the compact-target
transport construction, construct the two kernels, and prove the
normal-fibre estimate. We prove Theorem~\ref{Sob} in
Section~\ref{Proof} and the two corollaries in
Section~\ref{App}.

\section{Optimal transport and normal-fibre estimates}
\label{OT}

\subsection{Optimal transport on submanifolds}

Let $\Sigma\subset\R^N$ be a smooth embedded $n$-dimensional
submanifold, with $N>n\geq2$. For compactly supported probability
measures $\mu=F\dd\vol_\Sigma$ and $\nu=G\dd y$, consider the optimal transport problem with quadratic cost:
\begin{equation}\label{KP}
 \inf_{\pi\in\Pi(\mu,\nu)}
 \int_{\Sigma\times\R^N}\frac12|x-y|^2\dd\pi(x,y),
\end{equation}
where $\Pi(\mu,\nu)$ denotes the set of probability measures on
$\Sigma\times\R^N$ with marginals $\mu$ and $\nu$.
The source measure $\mu$ is absolutely continuous with respect to
$\vol_\Sigma$, but singular with respect to $N$-dimensional
Lebesgue measure. The following proposition represents an optimal
transport plan using the normal bundle of $\Sigma$.

We write
$T_x^\perp\Sigma$ for the normal space, $\II$ for the second
fundamental form, and $H=\tr_\Sigma\II$ for the unnormalized mean
curvature vector. For a locally semiconvex function $u$, let
$D_\Sigma^2u$ denote its Alexandrov Hessian and
$\Delta^\Sigma_{\ac}u=\tr D_\Sigma^2u$ the density of the absolutely
continuous part of its distributional Laplacian
$\Delta^\Sigma_{\mathcal D}u$. The latter is a locally finite signed
Radon measure. Denote by $B_L^N$ the open ball in $\R^N$ of
radius $L>0$ centred at the origin.

\begin{proposition}\label{OTProp}
Let $N>n\geq2$, and let $\Sigma\subset\R^N$ be a complete
$n$-dimensional embedded submanifold without boundary. Let
$\mu=F\dd\vol_\Sigma$ and $\nu=G\dd y$ be
compactly supported probability
measures, and assume that $G=0$
outside $B_L^N$ for some $L>0$. Then there exist a
finite locally semiconvex function $u:\Sigma\to\R$, a measurable
set $A\subset T^\perp\Sigma$ lying over $\{F>0\}$, and an injective map
\[
 \Phi:A\to B_L^N,\qquad
 \Phi(x,v)=\nabla^\Sigma u(x)+v,
\]
such that the projection of $A$ has full $F\dd\vol_\Sigma$-measure
and $\Phi(A)$ has full $G\dd y$-measure. The function $u$ is twice differentiable in the Alexandrov sense
on the projection of $A$, and
\begin{equation}\label{MA}
 F(x)=\int_{A_x}G(\Phi(x,v))\det Q_{x,v}\dd v,
 \qquad A_x=A\cap T_x^\perp\Sigma,
\end{equation}
for $F\dd\vol_\Sigma$-almost every $x$, where $Q_{x,v}=D_\Sigma^2u(x)-\langle\II(x),v\rangle\geq0$. For $\dd\vol_\Sigma(x)\dd v$-almost every $(x,v)\in A$, the approximate Jacobian of $\Phi$ equals $\det Q_{x,v}$, and 
\begin{equation}\label{Det}
 0\leq\det Q_{x,v}
 \leq\left(\frac{\Delta^\Sigma_{\ac}u(x)-\langle H(x),v\rangle}{n}\right)^n.
\end{equation}
Set
\[
 \dd\vartheta(x,v)=\ind_A(x,v)G(\Phi(x,v))\det Q_{x,v}
                   \dd\vol_\Sigma(x)\dd v.
\]
Then $\vartheta$ is a probability measure and
\begin{equation}\label{Plan}
 \pi=\bigl((x,v)\mapsto(x,\Phi(x,v))\bigr)_\#\vartheta
\end{equation}
is an optimal coupling for \eqref{KP}.
Moreover, $|\nabla^\Sigma u|\leq L$ almost everywhere and
$\Delta^\Sigma_{\ac}u\leq\Delta^\Sigma_{\mathcal D}u$ in the sense of
distributions.
\end{proposition}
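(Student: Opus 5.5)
The plan is to prove Proposition~\ref{OTProp} by the standard McCann--Brenier argument adapted to a singular source, following the normal-bundle formulation of Balogh--Krist\'aly~\cite{BK} and Wang~\cite{Wang}.

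\textbf{Existence and structure of the plan.} Since both measures are compactly supported, \eqref{KP} has an optimal plan $\pi$, and its support is $c$-cyclically monotone. Expanding $\frac12|x-y|^2$, we obtain Kantorovich potentials with $\frac12|x|^2-\langle x,y\rangle$ replaced by $-\langle x,y\rangle$. Concretely, there is a $c$-concave pair: the function
\[
 \tilde u(x)=\sup_{y\in\overline{B_L^N}}\bigl(\langle x,y\rangle-\psi(y)\bigr),
\]
which is convex on $\R^N$ and $L$-Lipschitz. It satisfies $\tilde u(x)+\psi(y)\ge\langle x,y\rangle$, with equality on $\supp\pi$. Take $u=\tilde u|_\Sigma$. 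Restriction of a convex function to a smooth submanifold is locally semiconvex, since the Hessian of the restriction equals the ambient Hessian on tangent vectors minus $\langle \II,\nabla\tilde u\rangle$, which is bounded below. Hence $u$ is finite and locally semiconvex with $|\nabla^\Sigma u|\le L$, and Alexandrov's theorem gives twice differentiability a.e.

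At a point $x$ of twice differentiability and any $y$ with $(x,y)\in\supp\pi$, the function $z\mapsto u(z)-\langle z,y\rangle$ on $\Sigma$ is minimized at $x$. The first-order condition gives $y^\top=\nabla^\Sigma u(x)$, so $y=\nabla^\Sigma u(x)+v$ with $v\in T_x^\perp\Sigma$. The second-order condition, computed by differentiating $\langle X,v\rangle$ along $\Sigma$, gives $Q_{x,v}=D^2_\Sigma u(x)-\langle\II(x),v\rangle\ge0$. We define $A$ as the set of $(x,v)$ with $x$ a point of twice differentiability in $\{F>0\}$ and $(x,\nabla^\Sigma u(x)+v)\in\supp\pi$. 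We then shrink $A$ to points where $\Phi$ is injective. Non-injectivity, meaning two source points $x\neq x'$ with the same $y$, forces $y\in\partial\psi^*$ multivalued, which happens only on a Lebesgue-null set of $y$. This is the usual a.e.\ differentiability of the convex conjugate $\psi$, and it is here that $\nu\ll\dd y$ is used. So $\Phi(A)$ still carries full $\nu$-measure, and the projection of $A$ carries full $\mu$-measure.

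\textbf{Jacobian and the Monge--Amp\`ere identity.} The main obstacle is showing that the approximate Jacobian of $\Phi(x,v)=\nabla^\Sigma u(x)+v$ on the normal bundle is $\det Q_{x,v}$, and then deriving \eqref{MA}. In an orthonormal frame adapted to $T_x\Sigma\oplus T_x^\perp\Sigma$, the differential of $\Phi$ has block form
\[
\begin{pmatrix} Q_{x,v} & 0\\ \ast & I_m\end{pmatrix}.
\]
Here the tangential derivative of a normal field $v$ contributes $-\langle\II,v\rangle$ tangentially (by Weingarten), and the normal part is the identity, so the determinant is $\det Q_{x,v}$. To make this rigorous with only Alexandrov regularity, I would use the Lipschitz approximation of $\nabla^\Sigma u$ on sets of large measure, as in McCann's proof of the Monge--Amp\`ere equation, and then apply the area formula for the injective map $\Phi$ on $A$.

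The area formula gives, for any bounded Borel $\zeta$,
\[
\int\zeta(y)G(y)\dd y=\int_A\zeta(\Phi)G(\Phi)\det Q\dd\vol_\Sigma\dd v.
\]
In particular $\vartheta$ is a probability measure and its $\Phi$-push-forward is $\nu$. To identify the $x$-marginal, note that $\pi$ is concentrated on the graph-like set $\{(x,\Phi(x,v))\}$ and $\Phi$ is injective. Hence $\pi=(\mathrm{id},\Phi)_\#\vartheta'$ for a unique $\vartheta'$ on $A$. Both $\vartheta'$ and $\vartheta$ have second marginal $\nu$ under $\Phi$, and $\Phi$ is injective, so $\vartheta'=\vartheta$. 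This proves \eqref{Plan} and optimality. Comparing first marginals then yields \eqref{MA} for $\mu$-a.e.\ $x$.

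\textbf{Remaining claims.} The inequality \eqref{Det} is AM--GM applied to the eigenvalues of $Q\ge0$, using $\tr Q=\Delta^\Sigma_{\ac}u-\langle H,v\rangle$. Finally, $\Delta^\Sigma_{\ac}u\le\Delta^\Sigma_{\mathcal D}u$ holds because, for a semiconvex function, the distributional Hessian is a matrix-valued measure whose singular part is nonnegative (Alexandrov/Evans--Gariepy, applied in charts to $u$ plus a smooth function). Therefore the singular part of the Laplacian is nonnegative.
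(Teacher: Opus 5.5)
Your proposal is correct and follows essentially the paper's route: the same double-convexified potential $u=w|_\Sigma$ with $w(z)=\sup_{|y|\le L}\{\langle z,y\rangle-\psi(y)\}$, the first- and second-order contact conditions, injectivity of $\Phi$ from the a.e.\ differentiability of $\psi$, the block-triangular Jacobian, and AM--GM. (The paper phrases injectivity through the reverse Brenier map $T=\nabla\psi$ with $T_\#\nu=\mu$ and reads off the $x$-marginal from $\mathrm{proj}_x=T\circ\Phi$; your pruning of $\psi$'s non-differentiability points and your graph argument produce exactly this.) One detail should be stated explicitly: $A$ must lie over a full-measure set on which $\nabla^\Sigma u$ is countably Lipschitz, so that the area formula has no exceptional null set whose image might carry $\nu$-mass; in your setup this costs nothing, since $\mu\ll\vol_\Sigma$ and $\nu(\Phi(A))$ is controlled through $\pi$.
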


\begin{proof}
We recall the contact construction from~\cite[Theorem~2.1 and its
proof]{BK}; see also~\cite{Wang}. We give the compact dual
construction to ensure that the potential is finite on all of
$\Sigma$.

Let $S=\supp\mu$. Since both supports are compact, Kantorovich
duality provides continuous functions $a_0$ on $S$ and $b_0$ on
$\overline B_L^N$ such that $a_0(x)+b_0(y)\geq\langle x,y\rangle$,
with equality on an optimal coupling; see~\cite{Villani}.
Set
\[
 \psi(y)=\sup_{x\in S}\{\langle x,y\rangle-a_0(x)\},\qquad
 w(z)=\sup_{y\in\overline B_L^N}
             \{\langle z,y\rangle-\psi(y)\}.
\]
The function $\psi$ is finite, convex and Lipschitz on $\R^N$.
Also, $\psi\leq b_0$ on $\overline B_L^N$ and $w\leq a_0$ on
$S$, and both inequalities are equalities on transport pairs.
Since $\nu$ is absolutely continuous, the Brenier--McCann
theorem~\cite{Brenier,McCann} shows that the reverse optimal
coupling is induced by $T=\nabla\psi$, with $T_\#\nu=\mu$.

Similarly,  the function $w$ is finite, convex and $L$-Lipschitz. On $\Sigma$,
each function $x\mapsto\langle x,y\rangle-\psi(y)$ has Hessian
$\langle\II,y\rangle$, uniformly bounded from below on compact
sets for $|y|\leq L$. Thus $u=w|_\Sigma$ is locally semiconvex
and $|\nabla^\Sigma u|\leq L$ almost everywhere.
For $\nu$-almost every $y$, the point $x=T(y)$ lies in $\{F>0\}$,
$u$ is twice differentiable at $x$, and $u(x)+\psi(y)=\langle x,y\rangle$.
The function $x'\mapsto u(x')-\langle x',y\rangle$ therefore has a minimum
at $x$, and its first and second derivatives give 
\[
 y=\nabla^\Sigma u(x)+v,\qquad v\in T_x^\perp\Sigma,\qquad
 D_\Sigma^2u(x)-\langle\II(x),v\rangle\geq0.
\]
To apply the area formula, choose a Borel set $E\subset\{F>0\}$
of full $\mu$-measure on which $u$ is twice differentiable in the
Alexandrov sense and $\nabla^\Sigma u$ admits a countable Lipschitz
decomposition in local coordinates;  see~\cite[Sections~3.1--3.2]{Federer}. Since $T_\#\nu=\mu$, we have $\nu(T^{-1}(E))=1$.
We then let $A\subset T^\perp\Sigma$ be the measurable contact
set associated with $T$, constructed as in
\cite[Theorem~2.1]{BK}; see also \cite[Theorem~3.3]{Wang}.
By construction, $\Phi$ is injective and $\Phi(A)$ has full
$\nu$-measure. Moreover, using smooth local normal frames and
restricting to bounded pieces, we see that $\Phi$ is Lipschitz
on countably many measurable pieces.

On these Lipschitz pieces, the approximate Jacobian of $\Phi$ equals $\det Q_{x,v}$ almost everywhere, as in~\cite{BK,Wang}, and the
arithmetic--geometric mean inequality gives \eqref{Det}. The area formula, together with $T_\#\nu=\mu$, then yields \eqref{MA} and the two marginals in \eqref{Plan}; see also~\cite{Federer}. We may further restrict $A$ to
the points over the full $\mu$-measure set where \eqref{MA}
holds. Since $T_\#\nu=\mu$, this restriction again preserves
full $\nu$-measure of the image.

Finally, $u(x)+\psi(y)\geq\langle x,y\rangle$ for $|y|\leq L$,
with equality when $y=\Phi(x,v)$ for $\vartheta$-almost every
$(x,v)\in A$. This proves the optimality of $\pi$. The last assertion follows from the local semiconvexity of $u$;
see~\cite[Theorem~2.1(v)]{BK}.
\end{proof}

In the following, we use the measure $\dd\vol_\Sigma(x)\dd v$ on the normal bundle,
where $\dd v$ is Lebesgue measure on $T_x^\perp\Sigma$. The area
formula on the countably Lipschitz pieces of $\Phi$ yields 
\begin{equation*}
 \int_\Sigma\int_{A_x}\ell(\Phi(x,v))\det Q_{x,v}\dd v\dd\vol_\Sigma(x)
 =\int_{\Phi(A)}\ell(y)\dd y
\end{equation*}
for every nonnegative measurable function $\ell$; see~\cite{Federer,Wang}.
The set $A$ has already been restricted to points over $\{F>0\}$
where \eqref{MA} holds. In our application, $G$ is
positive almost everywhere in $B_L^N$, so $B_L^N\setminus\Phi(A)$
is Lebesgue null. Hence
\begin{equation}\label{AreaB}
 \int_\Sigma\int_{A_x}\ell(\Phi(x,v))\det Q_{x,v}\dd v\dd\vol_\Sigma(x)
 =\int_{B_L^N}\ell(y)\dd y.
\end{equation}
In particular, the change of variables formula also applies to
the auxiliary kernel $K$ introduced below. Since
$\nabla^\Sigma u(x)\in T_x\Sigma$ and $v\in T_x^\perp\Sigma$,
we also have
\begin{equation}\label{Ball}
 |\nabla^\Sigma u(x)|^2+|v|^2
 =|\Phi(x,v)|^2\leq L^2,\qquad \forall\,(x,v)\in A.
\end{equation}

\subsection{Two kernels with prescribed marginals}

Fix $n\geq3$, $2\leq p<n$, and $N>n$, and write $m=N-n$.
Set
\begin{gather}
c_{n,p}=\int_{\R^n}(1+|z|^q)^{-n}\dd z
=\omega_n\frac{\Gamma(n/q+1)\Gamma(n/p)}{\Gamma(n)},
\label{c}\\
\JE=\frac{n(p-1)}{n-p},
\qquad 
\beta=\frac{p(n-1)}{n-p}=1+\JE.
\notag
\end{gather}
Then, the Aubin--Talenti constant can be written as
\begin{equation}\label{ATJ}
 \AT(n,p)=\frac1n c_{n,p}^{-1/n}\JE^{1/q}.
\end{equation}
Let $\alpha=q/2\in(1/2,1]$.
Since $s\mapsto s^\alpha$ is a Bernstein function,
$s\mapsto e^{-\lambda s^\alpha}$ is completely monotone
for every $\lambda>0$; see~\cite[Chapter~3]{Bernstein}.
Bernstein's theorem~\cite[Theorem~1.4]{Bernstein}
therefore gives a probability measure $\sigma_{\alpha,\lambda}$
on $(0,\infty)$ such that
\begin{equation*}
 \int_0^\infty e^{-st}\dd\sigma_{\alpha,\lambda}(t)
 =e^{-\lambda s^\alpha},\qquad \forall\,s\geq0.
\end{equation*}
These measures can be chosen measurably in $\lambda$ by dilating
$\sigma_{\alpha,1}$ by the factor $\lambda^{1/\alpha}$.
For $\alpha=1$, one has $\sigma_{1,\lambda}=\delta_\lambda$.
Define
\begin{equation*}
 \mathcal L_{N,\lambda}(y)
 =\pi^{-m/2}\int_0^\infty t^{m/2}e^{-t|y|^2}
       \dd\sigma_{\alpha,\lambda}(t),\qquad \forall\,y\in\R^N.
\end{equation*}
For every $n$-dimensional linear subspace $V\subset\R^N$ and
$z\in V$, integration in the normal variables and using Fubini's theorem yield 
\begin{equation}\label{LapM}
 \int_{V^\perp}\mathcal L_{N,\lambda}(z+v)\dd v
 =e^{-\lambda|z|^q}.
\end{equation}

We now introduce the following two radial kernels:
\begin{align*}
 \mathcal {G}_{N,p}(y)
 &=\frac1{c_{n,p}\Gamma(n)}\int_0^\infty
       \lambda^{n-1}e^{-\lambda}\mathcal L_{N,\lambda}(y)\dd\lambda,\\
 \mathcal {K}_{N,p}(y)
 &=\frac1{c_{n,p}\Gamma(n)}\int_0^\infty
       \lambda^{n-2}e^{-\lambda}\mathcal L_{N,\lambda}(y)\dd\lambda.
\end{align*}
Both kernels are positive and finite away from the origin. Indeed, since \(\sigma_{\alpha,\lambda}\) is a probability measure, for \(y\neq0\),
\[
0<\mathcal L_{N,\lambda}(y)
\leq \pi^{-m/2}\sup_{t>0}t^{m/2}e^{-t|y|^2}<\infty,
\]
uniformly in \(\lambda>0\). The defining integrals therefore converge,
since $\lambda^{n-1}e^{-\lambda}$ and
$\lambda^{n-2}e^{-\lambda}$ are integrable on $(0,\infty)$.
We set both kernels equal to zero at the origin. 

For the remainder of the paper, we write $\mathcal {G}=\mathcal {G}_{N,p}$ and $\mathcal {K}=\mathcal {K}_{N,p}$,
and set $a_+=\max\{a,0\}$ for $a\in\R$. We also set
\[
 g_{n,p}(z)=c_{n,p}^{-1}(1+|z|^q)^{-n},\qquad z\in\R^n,
\]
which is the normalized $p^*$-power of an Aubin--Talenti extremal. Since $g_{n,p}$ is radial, we use the same notation for the
corresponding density on any $n$-dimensional Euclidean subspace. 

\begin{proposition}
For every $n$-dimensional linear subspace $V\subset\R^N$ and $z\in V$,
\begin{align}
 \int_{V^\perp}\mathcal{G}(z+v)\dd v
 &=g_{n,p}(z),
       \label{Gm}\\
 \int_{V^\perp}\mathcal{K}(z+v)\dd v
 &=\frac{1+|z|^q}{n-1}g_{n,p}(z),
       \label{Km}
\end{align}
where $V^\perp$ denotes the orthogonal complement of $V$ in $\R^N$
and $\dd v$ is Lebesgue measure on $V^\perp$. In particular, $\mathcal{G},\mathcal{K}\in L^1(\R^N)$ and
\begin{equation}\label{Mass}
 \int_{\R^N}\mathcal{G}(y)\dd y=1,\qquad
 \int_{\R^N}\mathcal{K}(y)\dd y=\frac{1+\JE}{n-1}.
\end{equation}
Moreover, $\int_{\R^N}|y|^q\mathcal{G}(y)\dd y<\infty.$
\end{proposition}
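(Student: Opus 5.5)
The plan is to integrate the definitions of $\mathcal G$ and $\mathcal K$ over $V^\perp$ term by term, using Tonelli's theorem (everything is nonnegative), and then to apply \eqref{LapM} inside the $\lambda$-integral. For fixed $z\in V$ this gives
\[
 \int_{V^\perp}\mathcal G(z+v)\dd v
 =\frac1{c_{n,p}\Gamma(n)}\int_0^\infty\lambda^{n-1}e^{-\lambda}
   e^{-\lambda|z|^q}\dd\lambda
 =\frac{1}{c_{n,p}\Gamma(n)}\cdot\frac{\Gamma(n)}{(1+|z|^q)^n}
 =g_{n,p}(z),
\]
which is \eqref{Gm}. In the same way, with $\lambda^{n-2}$ in place of $\lambda^{n-1}$, the $\lambda$-integral equals $\Gamma(n-1)(1+|z|^q)^{-(n-1)}$. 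Since $\Gamma(n-1)/\Gamma(n)=1/(n-1)$, this yields $\frac{1+|z|^q}{n-1}g_{n,p}(z)$, which is \eqref{Km}. The only thing to check here is measurability of $(\lambda,t)\mapsto$ the integrands, so that Tonelli applies. This follows from the measurable choice of $\sigma_{\alpha,\lambda}$ by dilation of $\sigma_{\alpha,1}$ stated above. The origin, where the kernels were set to zero, is a null set and plays no role.

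For \eqref{Mass} I would fix one subspace $V$ and apply Fubini on $\R^N=V\oplus V^\perp$. Then
\[
\int_{\R^N}\mathcal G\dd y=\int_V g_{n,p}\dd z=1,
\]
by the definition of $c_{n,p}$. For $\mathcal K$,
\[
\int_{\R^N}\mathcal K\dd y=\frac{1}{(n-1)c_{n,p}}\int_{\R^n}(1+|z|^q)^{1-n}\dd z .
\]
This last integral converges because $q(n-1)>n$, which is equivalent to $p<n$. To evaluate it, I would use polar coordinates and the substitution $s=r^q$, reducing both $\int(1+|z|^q)^{-n}$ and $\int(1+|z|^q)^{1-n}$ to Beta integrals. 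This gives
\[
\int_{\R^n}(1+|z|^q)^{1-n}\dd z
= c_{n,p}\frac{B(n/q,\,n-1-n/q)}{B(n/q,\,n-n/q)}
= c_{n,p}\frac{n-1}{n-1-n/q}.
\]
Since $n/q=n(p-1)/p$, we have $n-1-n/q=(n-p)/p$, so the ratio is $p(n-1)/(n-p)=\beta=1+\JE$. This gives the second identity in \eqref{Mass}, and integrability of both kernels follows.

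For the $q$-moment of $\mathcal G$, the decomposition $|y|^q=(|z|^2+|v|^2)^{q/2}$ contains normal components, so the marginal identity alone is not enough. Since $q\le 2$, I would bound $|y|^q\le|z|^q+|v|^q$. The $|z|^q$ term integrates to $\int|z|^qg_{n,p}\dd z<\infty$, because $q-qn<-n$. For the $|v|^q$ term, I would return to the definition of $\mathcal G$ and compute the Gaussian moment directly:
\[
\pi^{-m/2}\int t^{m/2}e^{-t|z+v|^2}|v|^q\dd v=C_m t^{-q/2}e^{-t|z|^2}.
\]
Integrating over $z\in V$ then gives a constant times $t^{-q/2-n/2}$, so the task reduces to finiteness of $\int_0^\infty t^{-(n+q)/2}\dd\sigma_{\alpha,\lambda}(t)$ after averaging in $\lambda$. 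This is the main obstacle: the negative moments of $\sigma_{\alpha,\lambda}$ have to be controlled. I would use the identity
\[
t^{-\gamma}=\Gamma(\gamma)^{-1}\int_0^\infty s^{\gamma-1}e^{-st}\dd s,
\]
which converts the negative moment into $\Gamma(\gamma)^{-1}\int_0^\infty s^{\gamma-1}e^{-\lambda s^\alpha}\dd s=C\lambda^{-\gamma/\alpha}$ with $\gamma=(n+q)/2$. Since $\gamma/\alpha=(n+q)/q$, the remaining integral is $\int\lambda^{n-1-(n+q)/q}e^{-\lambda}\dd\lambda$. This is finite because $n-(n+q)/q=n/p-1>0$.
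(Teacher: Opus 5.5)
Your proof is correct, and for the marginal identities \eqref{Gm}, \eqref{Km} and the masses \eqref{Mass} it is the paper's argument: Tonelli, \eqref{LapM}, the Gamma integral, then Fubini over $V\oplus V^\perp$. Your direct Beta evaluation of $\int_{\R^n}(1+|z|^q)^{1-n}\dd z$ is the paper's identity $\int_{\R^n}(1+|z|^q)g_{n,p}\dd z=1+\JE$ computed in a different order.

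The genuine difference is the $q$-moment. You fix one $V$ and split $|y|^q\le|z|^q+|v|^q$. You then control the normal part through the Laplace representation: a Gaussian moment in $v$, then the negative moment $\int_0^\infty t^{-(n+q)/2}\dd\sigma_{\alpha,\lambda}(t)=C\lambda^{-(n+q)/q}$ obtained from $t^{-\gamma}=\Gamma(\gamma)^{-1}\int_0^\infty s^{\gamma-1}e^{-st}\dd s$, and finally the finiteness of $\Gamma(n/p-1)$. All of this checks out.

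The paper never touches $\sigma_{\alpha,\lambda}$. Because \eqref{Gm} holds for \emph{every} $n$-dimensional subspace, it chooses, for each coordinate $i$, a subspace $V_i\ni e_i$. Then $y_i=z_i$ is a tangential coordinate, and
\[
\int_{\R^N}|y_i|^q\mathcal G(y)\dd y=\int_{V_i}|z_i|^qg_{n,p}(z)\dd z\le\JE .
\]
Summing over $i$ with $|y|^q\le\sum_i|y_i|^q$ (valid since $q\le2$) gives the moment bound.

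So your remark that the marginal identity is not enough holds only for a single fixed $V$; the whole family of identities does suffice. The paper's route is shorter and gives the bound $N\JE$ using nothing beyond \eqref{Gm}. Yours is more computational but yields an explicit value for the normal moment.
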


\begin{proof}
By Tonelli's theorem and \eqref{LapM}, together with 
\[
\int_0^\infty\lambda^{a-1}e^{-b\lambda}\dd\lambda
=\Gamma(a)b^{-a},
\qquad a,b>0,
\]
we obtain \eqref{Gm} and \eqref{Km}. Moreover, note that 
\begin{equation}\label{J}
 \int_{\R^n}|z|^qg_{n,p}(z)\dd z
 =\frac{\mathrm{B}(n/q+1,n/p-1)}{\mathrm{B}(n/q,n/p)}
 =\frac{n(p-1)}{n-p}=\JE, 
\end{equation}
where $\mathrm{B}$ denotes the beta function. Integrating the marginal identities over $V$ and applying
Tonelli's theorem gives \eqref{Mass}. To prove the moment bound, fix $i=1,\ldots,N$ and choose an $n$-dimensional linear subspace
$V_i\subset\R^N$ containing the coordinate vector $e_i$. If
$y=z+v$ with $z\in V_i$ and $v\in V_i^\perp$, then $y_i=z_i$.
Hence, by Tonelli's theorem, \eqref{Gm}, and \eqref{J}, we have  
\[
\int_{\R^N}|y_i|^q\mathcal{G}(y)\dd y
=\int_{V_i}|z_i|^qg_{n,p}(z)\dd z
\leq\int_{V_i}|z|^qg_{n,p}(z)\dd z
=\JE. 
\]
Since $q\leq2$, $|y|^q\leq\sum_{i=1}^N|y_i|^q$, and 
therefore $\mathcal{G}$ has a finite $q$-moment. 
\end{proof}

\begin{remark}
For $1<p<2$, we have $\alpha=q/2>1$. In this case, setting
$\varphi(s)=(1+s^\alpha)^{-n}$, we obtain
\[
\varphi''(s)
=n\alpha s^{\alpha-2}(1+s^\alpha)^{-n-2}
\bigl((n\alpha+1)s^\alpha-(\alpha-1)\bigr),
\]
which is negative for sufficiently small $s>0$.
It follows that $\varphi$ does not admit a positive Laplace
representation, and hence the construction above requires $p\geq2$.
\end{remark}

\subsection{Fibrewise comparison}

The following estimate relates the two kernels on a restricted
normal fibre. 

\begin{lemma}\label{Fib}
Let $V\subset\R^N$ be an $n$-dimensional linear subspace, let $z\in V$,
and let $h:V^\perp\to[0,1]$ be measurable. Set
\[
 r=\frac1{g_{n,p}(z)}\int_{V^\perp}h(v)\mathcal{G}(z+v)\dd v.
\]
If $r>0$, then
\begin{equation}\label{hKG}
 \frac{\displaystyle\int_{V^\perp}h(v)\mathcal{K}(z+v)\dd v}
      {\displaystyle\int_{V^\perp}h(v)\mathcal{G}(z+v)\dd v}
 \leq (1+|z|^q)\left(\frac n{n-1}r^{-1/n}-1\right).
\end{equation}
\end{lemma}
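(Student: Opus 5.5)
The plan is to reduce \eqref{hKG} to a one-dimensional extremal problem in the Laplace variable $\lambda$, solve it by a bathtub (rearrangement) argument, and then verify a scalar inequality for incomplete Gamma integrals. Write $s=|z|^q$ and set
\[
 \phi(\lambda)=\int_{V^\perp}h(v)\mathcal L_{N,\lambda}(z+v)\dd v .
\]
Since $0\le h\le1$ and $\mathcal L_{N,\lambda}>0$, \eqref{LapM} gives $0\le\phi(\lambda)\le e^{-\lambda s}$. By Tonelli's theorem and the definitions of $\mathcal G$ and $\mathcal K$, the left side of \eqref{hKG} equals
\[
 \frac{\int_0^\infty\lambda^{n-2}e^{-\lambda}\phi(\lambda)\dd\lambda}{\int_0^\infty\lambda^{n-1}e^{-\lambda}\phi(\lambda)\dd\lambda}.
\]
Since $g_{n,p}(z)c_{n,p}=(1+s)^{-n}$, we also have
\[
 r=\frac{(1+s)^n}{\Gamma(n)}\int_0^\infty\lambda^{n-1}e^{-\lambda}\phi(\lambda)\dd\lambda .
\]
It therefore suffices to bound the ratio over all measurable $\phi$ with $0\le\phi\le e^{-\lambda s}$ and the denominator fixed by $r$.

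\textbf{Bathtub step.} Work with the measure $\dd m=\lambda^{n-1}e^{-\lambda}\dd\lambda$. The numerator is $\int\lambda^{-1}\phi\dd m$ and the denominator is $\int\phi\dd m$. The weight $\lambda^{-1}$ is decreasing, so for a fixed value of $\int\phi\dd m$ and the pointwise constraint $\phi\le e^{-\lambda s}$, the numerator is maximised by saturating the constraint on an initial interval:
\[
 \phi^*=e^{-\lambda s}\ind_{(0,\Lambda)},
\]
with $\Lambda$ chosen so that $\int\phi^*\dd m=\int\phi\dd m$. The standard proof is to compare $\int(\lambda^{-1}-\Lambda^{-1})(\phi^*-\phi)\dd m\ge0$.

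Next substitute $\mu=(1+s)\lambda$ and put $M=(1+s)\Lambda$. Define
\[
 P(M)=\int_0^M\mu^{n-1}e^{-\mu}\dd\mu,\qquad Q(M)=\int_0^M\mu^{n-2}e^{-\mu}\dd\mu .
\]
Then $r=P(M)/\Gamma(n)$, and the ratio becomes $(1+s)\,Q(M)/P(M)$. Hence \eqref{hKG} reduces to the scalar inequality
\[
 \frac{Q(M)}{P(M)}\le\frac n{n-1}\Bigl(\frac{\Gamma(n)}{P(M)}\Bigr)^{1/n}-1,\qquad 0<M\le\infty .
\]

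\textbf{Scalar inequality (main obstacle).} Integration by parts gives $P=(n-1)Q-p$, where $p(M)=M^{n-1}e^{-M}$. The inequality is therefore equivalent to
\[
 f(M):=n\Gamma(n)^{1/n}P^{-1/n}-n-\frac{p}{P}\ \ge 0 .
\]
The two endpoints behave well.
\begin{itemize}
\item At $M=\infty$ we have $f=0$; this is the equality case and matches $\int\mathcal K/\int\mathcal G=1/(n-1)$ on the full fibre, by \eqref{Gm} and \eqref{Km}.
\item As $M\to0$, $P\sim M^n/n$, so $f\sim n\bigl((n!)^{1/n}-1\bigr)/M\to+\infty$.
\end{itemize}
I would therefore try to show that $f$ is nonincreasing. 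Using $P'=p$ and $p'=p\bigl((n-1)/M-1\bigr)$, the condition $f'\le0$ becomes, after multiplying by $P^2/p>0$,
\[
 \Gamma(n)^{1/n}P^{1-1/n}+P\Bigl(\frac{n-1}{M}-1\Bigr)-p\ \ge 0 .
\]
To prove this I would use elementary bounds on $P$, namely $P\le\Gamma(n)$ and $P\ge p\,M/n$. The latter follows because $\mu^{n-1}e^{-\mu}\ge\mu^{n-1}e^{-M}$ on $[0,M]$, with equality asymptotically as $M\to0$. With these, I would split into the ranges $M\le n-1$, where all terms are favourable, and $M>n-1$. The tail range is where the argument is delicate. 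If monotonicity fails there, the fallback is to view $r^{1/n}$ as a concave-type function of the truncation level and compare it with its tangent at $M=\infty$, which is exactly the affine right side $\frac n{n-1}r^{-1/n}-1$. This scalar verification is the step I expect to require the most care; the reduction to it is routine.
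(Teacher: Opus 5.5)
There is a gap: the proposal stops where the content of the lemma lies, because the scalar inequality $f\ge0$ is never proved. Your reduction itself is correct. Tonelli and \eqref{LapM} turn the quotient in \eqref{hKG} into $\int_0^\infty\lambda^{n-2}e^{-\lambda}\phi\dd\lambda\big/\int_0^\infty\lambda^{n-1}e^{-\lambda}\phi\dd\lambda$ with $0\le\phi\le e^{-\lambda|z|^q}$. The bathtub comparison with the decreasing weight $\lambda^{-1}$ is valid, and the rescaling and the identity $(n-1)Q=P+M^{n-1}e^{-M}$ check out.

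What is missing is the last step. You announce a monotonicity strategy and call the range $M>n-1$ delicate, but do not resolve it. Your fallback is not formulated precisely either: the right side of \eqref{hKG} is affine in $r^{-1/n}$, not in $r$. What you would need is concavity of $Q/P$ as a function of $r^{-1/n}$, which you do not address.

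\textbf{Closing your route.} The two bounds you name suffice if you use concavity in $P$ instead of a case split. Write $\varpi=M^{n-1}e^{-M}$ for your $p$ and $k=(n-1)/M-1$. For fixed $M$, the function $D(P)=\Gamma(n)^{1/n}P^{1-1/n}+kP-\varpi$ is concave in the free variable $P$. Since $P(M)\in[M\varpi/n,\Gamma(n)]$, it suffices to check the two endpoints:
\[
D(M\varpi/n)\ge0\iff n!\,e^M\ge(1+M)^n,\qquad
D(\Gamma(n))\ge0\iff(n-1)\Gamma(n)\ge M^ne^{-M}.
\]
Both follow from $n!\ge e(n/e)^n$. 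For the first, minimise $M-n\log(1+M)$ at $M=n-1$; for the second, maximise $M^ne^{-M}$ at $M=n$. Hence $f'\le0$ on $(0,\infty)$, and $f\ge0$ follows from $f(M)\to0$ as $M\to\infty$.

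\textbf{The paper's route.} The paper needs none of this calculus; it uses a Lagrange multiplier. In your variables, for every $\sigma\ge0$,
\[
Q(M)-\sigma P(M)\le\int_0^\infty\mu^{n-2}e^{-\mu}(1-\sigma\mu)_+\dd\mu
\le\int_0^\infty\mu^{n-2}e^{-(1+\sigma)\mu}\dd\mu=\Gamma(n-1)(1+\sigma)^{1-n}.
\]
Choosing $1+\sigma=r^{-1/n}$ and dividing by $P=r\Gamma(n)$ gives exactly $Q/P\le\frac n{n-1}r^{-1/n}-1$.

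The paper runs this argument directly on the fibre. It uses $\int h\mathcal K\le s\int h\mathcal G+\int(\mathcal K-s\mathcal G)_+$ together with the bound $(1-s\lambda)_+\le e^{-s\lambda}$ inside the Laplace representation. Since only $0\le h\le1$ is used, the bathtub step is unnecessary.

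What your route buys is the exact supremum $(1+|z|^q)Q(M)/P(M)$ over the relaxed class. This lies strictly below the right side of \eqref{hKG} whenever $M<\infty$. The paper gives up that slack in exchange for an explicit closed form obtained in two lines.
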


\begin{proof}
For $s\geq0$ and $y\ne0$, since $(1-s\lambda)_+\leq e^{-s\lambda}$, it holds that 
\begin{align*}
 (\mathcal{K}(y)-s\mathcal{G}(y))_+
 &\leq\frac1{c_{n,p}\Gamma(n)}\int_0^\infty
    \lambda^{n-2}e^{-\lambda}(1-s\lambda)_+
    \mathcal L_{N,\lambda}(y)\dd\lambda\\
 &\leq\frac1{c_{n,p}\Gamma(n)}\int_0^\infty
    \lambda^{n-2}e^{-(1+s)\lambda}
    \mathcal L_{N,\lambda}(y)\dd\lambda.
\end{align*}
Integrating over $z+V^\perp$ and using
\eqref{LapM}, we obtain
\begin{equation*}
 \int_{V^\perp}(\mathcal{K}(z+v)-s\mathcal{G}(z+v))_+\dd v
 \leq\frac{(1+|z|^q+s)^{1-n}}{c_{n,p}(n-1)}.
\end{equation*}
Since $0\leq h\leq1$ and $g_{n,p}(z)=c_{n,p}^{-1}(1+|z|^q)^{-n}$, we have  
\[
 \int_{V^\perp}h(v)\mathcal{K}(z+v)\dd v
 \leq s\,g_{n,p}(z)r+\frac{(1+|z|^q+s)^{1-n}}{c_{n,p}(n-1)}.
\]
Choosing $s=(1+|z|^q)(r^{-1/n}-1)\geq0$ to minimize the right-hand side, we obtain
\[
 \int_{V^\perp}h(v)\mathcal{K}(z+v)\dd v
 \leq g_{n,p}(z)(1+|z|^q)\left(\frac n{n-1}r^{1-1/n}-r\right).
\]
Dividing by $g_{n,p}(z)r$ yields \eqref{hKG}. When $h\equiv1$, we have $r=1$, and \eqref{Km} shows that
equality holds in \eqref{hKG}. 
\end{proof}

\begin{lemma}\label{Sc}
Let $1<p<n$ and $q=p/(p-1)$. If $J,\delta\geq0$
satisfy $\JE-J\leq n\delta$, then
\begin{equation*}
 \frac{J^{1/q}}{1+J+\delta}
 \leq\frac{\JE^{1/q}}{1+\JE}.
\end{equation*}
\end{lemma}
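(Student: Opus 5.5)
The inequality is elementary, and the plan is to reduce it to a one-variable concavity statement. Since the right-hand side of the desired inequality does not depend on $J,\delta$, and the left-hand side decreases in $\delta$, the worst case is the smallest admissible $\delta$. That is,
\[
 \delta=\delta_0(J):=\max\Bigl\{0,\frac{\JE-J}{n}\Bigr\}.
\]
It therefore suffices to show $\phi(J)\le\phi(\JE)$, where
\[
 \phi(J)=\frac{J^{1/q}}{1+J+\delta_0(J)},
\]
for all $J\ge0$. We split into the two regimes $J\ge\JE$ and $0\le J<\JE$.

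\emph{Case $J\ge \JE$.} Here $\delta_0=0$ and $\phi(J)=J^{1/q}/(1+J)$. Differentiating $\log\phi$ gives
\[
 \frac{1}{qJ}-\frac{1}{1+J},
\]
which vanishes at $J=1/(q-1)=p-1$. So $\phi$ is decreasing on $[p-1,\infty)$. Since $n/(n-p)>1$, we have $\JE=n(p-1)/(n-p)>p-1$, hence $\phi(J)\le\phi(\JE)$ in this range.

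\emph{Case $0\le J<\JE$.} Here the denominator is
\[
 1+J+\frac{\JE-J}{n}=\frac{n+\JE+(n-1)J}{n}.
\]
We must show that $\psi(J)=J^{1/q}/\bigl(n+\JE+(n-1)J\bigr)$ is maximized on $[0,\JE]$ at $J=\JE$. Differentiating $\log\psi$ gives
\[
 \frac{1}{qJ}-\frac{n-1}{n+\JE+(n-1)J}.
\]
This is positive if and only if
\[
 n+\JE+(n-1)J> q(n-1)J,\quad\text{i.e.}\quad J<\frac{n+\JE}{(n-1)(q-1)}.
\]
So $\psi$ is increasing up to $J^*=(n+\JE)(p-1)/(n-1)$, and it suffices to check $J^*\ge\JE$.

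This check is the only genuine computation. Using $\JE+n=n(p-1)/(n-p)+n=n(n-1)/(n-p)$, we get
\[
 J^*=\frac{n(n-1)}{n-p}\cdot\frac{p-1}{n-1}=\frac{n(p-1)}{n-p}=\JE .
\]
So $J^*=\JE$ exactly. Hence $\psi$ is increasing on $[0,\JE]$ and $\phi(J)\le\phi(\JE)$ there too. Continuity at $J=\JE$ (where $\delta_0=0$ from both sides) then closes the argument.

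The main obstacle is only to notice that the critical point of the second branch lands exactly at $\JE$. This shows the factor $n$ in the hypothesis $\JE-J\le n\delta$ is precisely what makes the bound sharp, and it must be verified by the identity above rather than by a cruder estimate. The case $J=0$ is trivial, since the left-hand side vanishes.
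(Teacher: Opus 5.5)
Your proof is correct, but it takes a different and more computational route than the paper. The paper does not minimize over $\delta$ or split into cases. It bounds $J^{1/q}$ by its tangent line at $\JE$, using concavity of $t\mapsto t^{1/q}$, and the identity $(1+\JE)/(q\JE)=1-1/n$ turns this into
\[
 \frac{1+\JE}{\JE^{1/q}}\,J^{1/q}\le 1+J+\frac{\JE-J}{n}\le 1+J+\delta ,
\]
valid for all $J\ge0$ at once; when $J\ge\JE$ the middle expression is simply at most $1+J$.

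Your key computation $J^*=\JE$ is this same identity in disguise. The first-order condition for $\psi$ at $\JE$ reads $1/(q\JE)=(n-1)/\bigl(n(1+\JE)\bigr)$, which is equivalent to $(1+\JE)/(q\JE)=1-1/n$.

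The tangent-line argument buys brevity and uniformity. It needs no case distinction, no separate check that $\JE>p-1$, and no differentiation.

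Your route instead makes the extremal configuration explicit ($\delta=\delta_0(J)$, $J=\JE$). It also shows clearly why $n$ is the largest constant allowed in the hypothesis: for a larger constant the critical point would drop below $\JE$. The tangent-line picture shows this too, since any other line through the same point at $\JE$ crosses the concave curve.
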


\begin{proof}
	By concavity of $t\mapsto t^{1/q}$ and the fact
	$(1+\JE)/(q\JE)-1=-1/n$, we obtain 
	\[
	\frac{1+\JE}{\JE^{1/q}}J^{1/q}
	\leq 1+J+\frac{\JE-J}{n}
	\leq 1+J+\delta.
	\]
	Dividing by $1+J+\delta$ proves the result.
\end{proof} 

\section{Proof of the sharp Sobolev inequality}
\label{Proof}

\subsection{The truncated transport estimate}

We retain the notation of Section~\ref{OT}.
Fix $N>n$. For $L>0$,
let
\begin{equation}\label{ZLML}
 Z_L=\int_{B_L^N}\mathcal{G}(y)\dd y,\qquad
 M_L=\int_{B_L^N}\mathcal{K}(y)\dd y,
\end{equation}
and
\begin{equation}\label{Err}
 \eta_L=\frac1n\left[1+\JE-(n-1)\frac{M_L}{Z_L}\right]_+.
\end{equation}
These quantities depend only on $n,p,N,L$. Since $\mathcal{G}>0$ almost
everywhere, $0<Z_L\leq1$. By \eqref{Mass} and monotone
convergence,
\begin{equation}\label{Lim}
 Z_L\longrightarrow1,\qquad
 M_L\longrightarrow\frac{1+\JE}{n-1},\qquad
 \eta_L\longrightarrow0, 
 \qquad\text{as }L\to\infty.
\end{equation}

\begin{proposition}\label{Tr}
Let $n\geq3$, $2\leq p<n$, and $N>n$. Let
$\Sigma\subset\R^N$ be a complete $n$-dimensional embedded submanifold
without boundary. Then, for every $L>0$ and every nonnegative $f\in C_c^\infty(\Sigma)$
with $\int_\Sigma f^{p^*}\dd\vol_\Sigma=1$,
\begin{equation}\label{TrI}
 Z_L^{1/n}(1-\eta_L)
 \leq\AT(n,p)\|\nabla^\Sigma f\|_{L^p(\Sigma)}
 +\frac{c_{n,p}^{-1/n}L}{n}
       \int_\Sigma |H|f^\beta\dd\vol_\Sigma.
\end{equation}
\end{proposition}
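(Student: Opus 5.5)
The plan is to follow the Cordero-Erausquin--Nazaret--Villani scheme. We transport $\mu=f^{p^*}\dd\vol_\Sigma$ to the truncated kernel $\nu=Z_L^{-1}\ind_{B_L^N}\mathcal G\dd y$. Since $\mathcal G>0$ a.e., the target density is positive a.e. in $B_L^N$, so Proposition~\ref{OTProp} and \eqref{AreaB} apply. Write $F=f^{p^*}$ and $z=\nabla^\Sigma u(x)\in T_x\Sigma=:V$. Then on $A_x$ we have $G(\Phi(x,v))=Z_L^{-1}\mathcal G(z+v)$. Take $h=\ind_{A_x}$ in Lemma~\ref{Fib}; this is legitimate because $A_x-$ (viewed in $V^\perp$) lies in the ball by \eqref{Ball}. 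Set $r(x)=g_{n,p}(z)^{-1}\int_{A_x}\mathcal G(z+v)\dd v$.

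\emph{Step 1: upper bound on $r^{-1/n}$ from the Monge--Amp\`ere identity.} By \eqref{MA}, \eqref{Det} and $|v|\le L$,
\[
F(x)\le Z_L^{-1}\Bigl(\tfrac{\Delta^\Sigma_{\ac}u(x)+L|H(x)|}{n}\Bigr)^n g_{n,p}(z)\,r(x).
\]
Since $g_{n,p}(z)^{1/n}(1+|z|^q)=c_{n,p}^{-1/n}$, this gives
\[
F(1+|z|^q)\,r^{-1/n}\le \frac{c_{n,p}^{-1/n}Z_L^{-1/n}}{n}\,F^{1-1/n}\bigl(\Delta^\Sigma_{\ac}u+L|H|\bigr),
\qquad F^{1-1/n}=f^\beta .
\]

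\emph{Step 2: lower bound from the fibrewise comparison.} Write $J=\int_\Sigma|\nabla^\Sigma u|^qF\dd\vol_\Sigma$, which is finite since $|\nabla^\Sigma u|\le L$. Lemma~\ref{Fib} gives, pointwise,
\[
\int_{A_x}\mathcal K(z+v)\dd v\le(1+|z|^q)\Bigl(\tfrac{n}{n-1}r^{-1/n}-1\Bigr)\int_{A_x}\mathcal G(z+v)\dd v .
\]
Integrate this over $\Sigma$. On the left, \eqref{AreaB} with $\ell=\mathcal K/(Z_L^{-1}\mathcal G)$ yields $M_L$. On the right, $\int_{A_x}\mathcal G=Z_LF$. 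Dividing by $Z_L$, we get
\[
\frac{n-1}{n}\Bigl(\frac{M_L}{Z_L}+1+J\Bigr)\le\int_\Sigma F(1+|z|^q)r^{-1/n}.
\]
By the definition \eqref{Err}, $(n-1)M_L/Z_L\ge 1+\JE-n\eta_L$. Hence the left-hand side is at least $1+J+\delta-\eta_L$, where $\delta=(\JE-J)/n$.

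\emph{Step 3: integration by parts and the scalar inequality.} Since $\Delta^\Sigma_{\ac}u\le\Delta^\Sigma_{\mathcal D}u$ and $f^\beta\ge0$ is compactly supported and Lipschitz,
\[
\int f^\beta\Delta^\Sigma_{\ac}u\le-\beta\int f^{\beta-1}\langle\nabla^\Sigma f,\nabla^\Sigma u\rangle\le\beta\|\nabla^\Sigma f\|_p J^{1/q},
\]
by H\"older's inequality and $(\beta-1)q=p^*$. Combining Steps 1--3,
\[
1+J+\delta-\eta_L\le\frac{c_{n,p}^{-1/n}Z_L^{-1/n}}{n}\Bigl(\beta J^{1/q}\|\nabla^\Sigma f\|_p+L\int|H|f^\beta\Bigr).
\]
Now use $1+J+\delta-\eta_L\ge(1+J+\delta)(1-\eta_L)$, which holds because $1+J+\delta\ge 1$ (note $J+\delta=\JE/n+(n-1)J/n\ge0$). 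Divide by $1+J+\delta$, and apply Lemma~\ref{Sc}. Its concavity proof also covers $J>\JE$, where $\delta<0$: one only needs the first displayed inequality of that proof, whose right-hand side is positive. Lemma~\ref{Sc} gives $\beta J^{1/q}/(1+J+\delta)\le\beta\JE^{1/q}/(1+\JE)=\JE^{1/q}$. Together with \eqref{ATJ}, and bounding the $H$-term using $1+J+\delta\ge1$, this yields \eqref{TrI}.

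\emph{Main obstacle.} The delicate point is Step 2, namely justifying the global integration of the fibre inequality. This requires applying \eqref{AreaB} to the non-integrable-looking ratio $\mathcal K/\mathcal G$, handling the points where $r=0$ (these are $F$-null), and correctly identifying $V^\perp$ with $T_x^\perp\Sigma$. A second point needing care is the sign bookkeeping when $\delta$ is negative.
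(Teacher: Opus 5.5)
There is a genuine gap in Step~2: you apply the fibre comparison to the wrong weight. With $h=\ind_{A_x}$, Lemma~\ref{Fib} controls the \emph{unweighted} fibre integrals $\int_{A_x}\mathcal K(z+v)\dd v$ and $\int_{A_x}\mathcal G(z+v)\dd v=g_{n,p}(z)r(x)$. Both identifications you then make require the Jacobian factor.

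First, by \eqref{MA}, $Z_LF(x)=\int_{A_x}\mathcal G(z+v)\det Q_{x,v}\dd v$, which is not $\int_{A_x}\mathcal G(z+v)\dd v$. Second, \eqref{AreaB} with $\ell=\mathcal K$ (or your $\ell=Z_L\mathcal K/\mathcal G$ integrated against $\vartheta$) gives $M_L=\int_\Sigma\int_{A_x}\mathcal K(\Phi(x,v))\det Q_{x,v}\dd v\dd\vol_\Sigma(x)$. That is not the integral of $\int_{A_x}\mathcal K(z+v)\dd v$.

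Since $\det Q_{x,v}=\det(D_\Sigma^2u(x)-\langle\II(x),v\rangle)$ varies with $v$ when $\II(x)\neq0$, the ratio bounded in \eqref{hKG} is not the ratio $\int_{A_x}\mathcal K(\Phi)\det Q_{x,v}\dd v/(Z_LF(x))$, which is the one that integrates to $M_L/Z_L$. Bounding $\det Q_{x,v}$ above by $((\Delta^\Sigma_{\ac}u+L|H|)/n)^n$ does not repair this. The resulting bound is proportional to $g_{n,p}(z)r(x)((\Delta^\Sigma_{\ac}u+L|H|)/n)^n$, and this quantity is $\geq Z_LF(x)$, which is the wrong direction.

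Your Step~2 inequality is in fact stronger than what the method can give, since your $r$ dominates the correctly weighted one. Heuristically, where $A_x$ is essentially the whole truncated fibre ($r\approx1$), it would amount as $L\to\infty$ to $J\gtrsim\JE$. That is exactly the tangential-moment information that is unavailable and that the two-kernel comparison is designed to bypass.

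The missing idea is to put the normalized Jacobian into $h$. Take $h_x(v)=n^n\det Q_{x,v}/(\Delta^\Sigma_{\ac}u(x)+L|H(x)|)^n$ on $A_x$ and $h_x=0$ off $A_x$. This is $[0,1]$-valued by \eqref{Det} and $-\langle H,v\rangle\leq L|H|$. Define $r$ through $h_x$.

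Then \eqref{MA} turns your Step~1 into the identity $Z_L^{1/n}f^{p^*/n}(1+|z|^q)r^{-1/n}=\frac{c_{n,p}^{-1/n}}{n}(\Delta^\Sigma_{\ac}u+L|H|)$ on $\{f>0\}$. The left side of \eqref{hKG} becomes exactly $\int_{A_x}\mathcal K(\Phi(x,v))\det Q_{x,v}\dd v/(Z_Lf^{p^*}(x))$. Multiplying by $f^{p^*}$ and integrating via \eqref{AreaB} with $\ell=\mathcal K$ then yields precisely the inequality you wrote in Step~2.

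From there your argument is correct. You eliminate $r$ between the two bounds and take $\delta=(\JE-J)/n$; for this choice the tangent-line inequality behind Lemma~\ref{Sc} holds for all $J\geq0$. You then absorb $\eta_L$ through $(1+J+\delta)(1-\eta_L)\leq1+J+\delta-\eta_L$. This is a valid, slightly streamlined variant of the paper's bookkeeping, which instead keeps $\delta$ as the deficit $\int f^{p^*}(1+|\nabla^\Sigma u|^q)(r^{-1/n}-1)$ and applies Lemma~\ref{Sc} to $\delta+\eta_L$.

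The obstacle you flagged is not the real one: once the weight is in place, no ratio $\mathcal K/\mathcal G$ needs to be integrated.
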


\begin{proof}
Apply Proposition~\ref{OTProp} with $F=f^{p^*}$ and
target density $G_L=Z_L^{-1}\ind_{B_L^N}\mathcal{G}$. Since $G_L>0$ almost
everywhere in $B_L^N$, both \eqref{AreaB} and
\eqref{Ball} hold. For any $v\in A_x$, it holds that 
\[
 0\leq\tr Q_{x,v}
 =\Delta^\Sigma_{\ac}u-\langle H,v\rangle
 \leq\Delta^\Sigma_{\ac}u+L|H|.
\]
Thus \eqref{Det} yields $0\leq\det Q_{x,v}\leq n^{-n}(\Delta^\Sigma_{\ac}u+L|H|)^n$. 
With the choice of $G_L$, \eqref{MA} becomes 
\begin{equation}\label{MA1}
	Z_Lf^{p^*}(x)
	=\int_{A_x}\mathcal{G}(\Phi(x,v))\det Q_{x,v}\dd v, 
\end{equation}
which implies $\Delta^\Sigma_{\ac}u+L|H|>0$ almost
everywhere on $\{f>0\}$. At these points, define
\[
 h_x(v)=
 \begin{cases}
 \displaystyle\frac{n^n\det Q_{x,v}}
 {(\Delta^\Sigma_{\ac}u(x)+L|H(x)|)^n},&v\in A_x,\\[5pt]
 0,&v\notin A_x,
 \end{cases}
\]
and
\[
 r(x)=\frac{1}{g_{n,p}(\nabla^\Sigma u(x))}
       \int_{T_x^\perp\Sigma}h_x(v)\mathcal{G}(\nabla^\Sigma u(x)+v)\dd v.
\]
Then $0\leq h_x\leq1$ and $0<r\leq1$. We extend $r$ to all of $\Sigma$ by setting $r=1$ elsewhere. The Monge--Amp\`ere identity \eqref{MA1} then becomes
\begin{equation}\label{Root}
 Z_L^{1/n}f^{p^*/n}(1+|\nabla^\Sigma u|^q)r^{-1/n}
 =\frac{c_{n,p}^{-1/n}}n(\Delta^\Sigma_{\ac}u+L|H|)
\end{equation}
almost everywhere on $\{f>0\}$. Set
\begin{align*}
 J=\int_\Sigma f^{p^*}|\nabla^\Sigma u|^q\dd\vol_\Sigma,\qquad
 \delta=\int_\Sigma f^{p^*}(1+|\nabla^\Sigma u|^q)
                    (r^{-1/n}-1)\dd\vol_\Sigma.
\end{align*}
Then $0\leq J\leq L^q$. Since $\beta+p^*/n=p^*$, multiplying
\eqref{Root} by $f^\beta$ and integrating, we obtain 
\begin{equation}\label{Int}
 Z_L^{1/n}(1+J+\delta)
 =\frac{c_{n,p}^{-1/n}}n
   \int_\Sigma f^\beta(\Delta^\Sigma_{\ac}u+L|H|)\dd\vol_\Sigma.
\end{equation}
Since $\Delta^\Sigma_{\ac}u\in L^1_{\mathrm{loc}}(\Sigma)$,
$H$ is smooth, and $f$ is compactly supported, the right-hand
side is finite, which implies $\delta<\infty$. 

Since $\beta>1$, we have $f^\beta\in C_c^1(\Sigma)$. 
By approximation in $C^1$ with nonnegative smooth functions
supported in a fixed compact set, we may use $f^\beta$ as a test
function in
$\Delta^\Sigma_{\ac}u\leq\Delta^\Sigma_{\mathcal D}u$. By integration by parts and H\"older's inequality, using 
$q(\beta-1)=p^*$, we have  
\begin{align*}
 \int_\Sigma f^\beta(\Delta^\Sigma_{\ac}u+L|H|)\dd\vol_\Sigma
 &\leq-\beta\int_\Sigma f^{\beta-1}
       \langle\nabla^\Sigma f,\nabla^\Sigma u\rangle\dd\vol_\Sigma
       +L\int_\Sigma|H|f^\beta\dd\vol_\Sigma\\
 &\leq\beta\|\nabla^\Sigma f\|_{L^p(\Sigma)}J^{1/q}
       +L\int_\Sigma|H|f^\beta\dd\vol_\Sigma.
\end{align*}
Combining this with \eqref{Int}, we obtain
\begin{equation}\label{En}
 Z_L^{1/n}(1+J+\delta)
 \leq\frac\beta n c_{n,p}^{-1/n}
       \|\nabla^\Sigma f\|_{L^p(\Sigma)}J^{1/q}
       +\frac{c_{n,p}^{-1/n}L}{n}
        \int_\Sigma|H|f^\beta\dd\vol_\Sigma.
\end{equation}

We next relate $J$ to $\delta$. Applying Lemma~\ref{Fib} with $V=T_x\Sigma$, $z=\nabla^\Sigma u(x)$ and $h=h_x$, and by \eqref{MA1}, it holds that 
\[
 \frac{\displaystyle\int_{A_x}\mathcal{K}(\Phi(x,v))\det Q_{x,v}\dd v}
      {Z_Lf^{p^*}(x)}
 \leq(1+|\nabla^\Sigma u(x)|^q)
      \left(\frac n{n-1}r(x)^{-1/n}-1\right)
\]
for almost every $x\in\{f>0\}$. Multiplying by $f^{p^*}$ and
using \eqref{AreaB}, we obtain
\begin{align*}
 \frac{M_L}{Z_L}=\frac1{Z_L}\int_\Sigma\int_{A_x}
       \mathcal{K}(\Phi(x,v))\det Q_{x,v}\dd v\dd\vol_\Sigma(x)\leq\frac{1+J+n\delta}{n-1}.
\end{align*}
By \eqref{Err}, this implies
$\JE-J\leq n(\delta+\eta_L)$. Then by Lemma~\ref{Sc}, we obtain 
\[
 \frac{J^{1/q}}{1+J+\delta+\eta_L}
 \leq\frac{\JE^{1/q}}{1+\JE}.
\]
Add $Z_L^{1/n}\eta_L$ to both sides of
\eqref{En} and divide by
$1+J+\delta+\eta_L\geq1$. Since $\beta=1+\JE$, \eqref{ATJ}
yields
\[
 Z_L^{1/n}
 \leq\AT(n,p)\|\nabla^\Sigma f\|_{L^p(\Sigma)}
       +\frac{c_{n,p}^{-1/n}L}{n}
        \int_\Sigma|H|f^\beta\dd\vol_\Sigma
       +Z_L^{1/n}\eta_L, 
\]
which is the thesis. 
\end{proof}

\begin{remark}
For an embedded minimal submanifold, letting $L\to\infty$ in
\eqref{TrI} proves \eqref{SobI}.
The mean curvature term is retained since the approximating
embeddings used below need not be minimal.
\end{remark}

\subsection{Approximation of minimal immersions}

\begin{lemma}\label{Emb}
Let $X:(\Sigma,g)\to\R^{n+m}$ be a complete
$n$-dimensional isometric immersion without boundary. Then there exists a
smooth proper embedding $\Psi:\Sigma\to\R^k$, with $k=2n+1$, such that
for every $\varepsilon>0$,
\begin{equation*}
 X_\varepsilon=(X,\varepsilon\Psi):\Sigma\to\R^{n+m+k}
\end{equation*}
is a smooth proper embedding. Its induced metric is complete and
satisfies
\begin{equation}\label{Met}
 g_\varepsilon=g+\varepsilon^2\Psi^*g_{\Euc}
 \longrightarrow g\quad\mathrm{in }\,\,C^\infty_{\mathrm{loc}}.
\end{equation}
Writing $H_X=\Delta_gX$ and
$H_\varepsilon=\Delta_{g_\varepsilon}X_\varepsilon$
for the unnormalized mean curvature vectors, we have
\begin{equation}\label{Hlim}
 H_\varepsilon\longrightarrow(H_X,0)
 \quad\text{in }C^\infty_{\mathrm{loc}}.
\end{equation}
If $X$ is minimal, then for every compact $K\subset \Sigma$, there exist 
$C_K,\varepsilon_K>0$ such that
\begin{equation}\label{Hrate}
 \sup_K|H_\varepsilon|\leq C_K\varepsilon,
 \qquad \forall\, \varepsilon\in (0,\varepsilon_K).
\end{equation}
\end{lemma}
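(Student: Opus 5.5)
We take $\Psi$ to be a proper smooth embedding $\Sigma\to\R^{2n+1}$ from the strong Whitney embedding theorem. The differential topology is then routine. $X_\varepsilon=(X,\varepsilon\Psi)$ is injective because $\Psi$ is, and it is an immersion because $dX$ already is. Properness follows from $|X_\varepsilon|\geq\varepsilon|\Psi|$ and the properness of $\Psi$, so preimages of bounded sets are relatively compact. An injective proper immersion is an embedding. Next, the induced metric is $g_\varepsilon=X^*g_{\Euc}+\varepsilon^2\Psi^*g_{\Euc}=g+\varepsilon^2\Psi^*g_{\Euc}$. This gives \eqref{Met}, since $\Psi^*g_{\Euc}$ is a fixed smooth symmetric tensor and the convergence is locally uniform with all derivatives. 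Completeness of $g_\varepsilon$ holds because $g_\varepsilon\geq g$: every $g_\varepsilon$-Cauchy sequence is $g$-Cauchy, hence convergent. (Alternatively, a properly embedded closed submanifold of Euclidean space is complete.)

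For the mean curvature we use $H_\varepsilon=\Delta_{g_\varepsilon}X_\varepsilon=(\Delta_{g_\varepsilon}X,\varepsilon\Delta_{g_\varepsilon}\Psi)$. This is valid because, for any isometric immersion, the componentwise Laplace--Beltrami of the position vector is the unnormalized mean curvature vector. In local coordinates,
\[
\Delta_{g_\varepsilon}=g_\varepsilon^{ij}\bigl(\partial_i\partial_j-\Gamma(g_\varepsilon)_{ij}^k\partial_k\bigr).
\]
The coefficients $g_\varepsilon^{ij}$ and $\Gamma(g_\varepsilon)$ are smooth in $(\varepsilon,x)$, including at $\varepsilon=0$, because they are rational in $g+\varepsilon^2\Psi^*g_{\Euc}$ and its derivatives, with nonvanishing determinant. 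Hence $\Delta_{g_\varepsilon}X\to\Delta_gX=H_X$ in $C^\infty_{\mathrm{loc}}$. The term $\varepsilon\Delta_{g_\varepsilon}\Psi$ is $\varepsilon$ times a family bounded in $C^\infty_{\mathrm{loc}}$, so it tends to $0$. This proves \eqref{Hlim}.

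For \eqref{Hrate}, assume $X$ is minimal, so $\Delta_gX=0$. On a compact $K$, the coefficients of $\Delta_{g_\varepsilon}-\Delta_g$ are smooth in $\varepsilon^2$ and vanish at $\varepsilon=0$. Thus they are $O(\varepsilon^2)$ uniformly on $K$ for $\varepsilon<\varepsilon_K$, say $\varepsilon_K=1$, and so $\sup_K|\Delta_{g_\varepsilon}X|\leq C\varepsilon^2\|X\|_{C^2(K)}$. The second component satisfies $\sup_K|\varepsilon\Delta_{g_\varepsilon}\Psi|\leq C\varepsilon$. Together these give $\sup_K|H_\varepsilon|\leq C_K\varepsilon$.

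Nothing here is a genuine obstacle. The points needing care are: the identity $H=\Delta X$ must be applied to each metric $g_\varepsilon$ with respect to its own Laplacian; the smooth dependence of the coefficients on $\varepsilon$ must be uniform on compact sets, which is handled by covering $K$ with finitely many charts; and completeness must come from comparing $g_\varepsilon\geq g$.
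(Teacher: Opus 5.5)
Your proposal is correct and follows essentially the same route as the paper. Both use the proper Whitney embedding $\Psi$, derive injectivity and properness of $X_\varepsilon$ from those of $\Psi$, get completeness from $g_\varepsilon\geq g$ via Cauchy sequences, and expand $H_\varepsilon=(\Delta_{g_\varepsilon}X,\varepsilon\Delta_{g_\varepsilon}\Psi)$ with $\Delta_{g_\varepsilon}-\Delta_g=O_K(\varepsilon^2)$ on compact sets. The only differences are cosmetic: you obtain the immersion property from $dX$ rather than $d\Psi$, and you phrase properness through the bound $|X_\varepsilon|\geq\varepsilon|\Psi|$.
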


\begin{proof}
The proper version of Whitney's embedding theorem gives
$\Psi:\Sigma\to\R^{2n+1}$; see~\cite[Theorem~6.15]{Lee}.
The map $X_\varepsilon$ is an injective immersion since $\Psi$
is an embedding. It is proper, since the inverse image of a compact set is closed and is contained in the inverse image under
$\Psi$ of a compact set. Hence $X_\varepsilon$ is an embedding. 

Since the two Euclidean factors are orthogonal and $X$ is
isometric, the induced metric is
$g_\varepsilon=g+\varepsilon^2\Psi^*g_{\Euc}$. 
This proves the identity in \eqref{Met}. In particular,
$g_\varepsilon\geq g$. A $g_\varepsilon$-Cauchy sequence is therefore $g$-Cauchy
and converges to some $s\in\Sigma$. Since convergence on a
smooth manifold is independent of the choice of Riemannian
metric, it also converges to $s$ with respect to
$g_\varepsilon$, which implies $g_\varepsilon$ is complete. 

On every compact set, $g_\varepsilon-g=O(\varepsilon^2)$ in each
$C^r$-norm. Since $H_\varepsilon=\Delta_{g_\varepsilon}X_\varepsilon$,
\begin{equation*}
 H_\varepsilon
 =\bigl(\Delta_{g_\varepsilon}X,
        \varepsilon\Delta_{g_\varepsilon}\Psi\bigr)
 =\bigl(H_X+O_K(\varepsilon^2),
        \varepsilon\Delta_\Sigma\Psi+O_K(\varepsilon^3)\bigr), 
\end{equation*}
which proves \eqref{Hlim} and
\eqref{Hrate}.
\end{proof}

\begin{proof}[Proof of Theorem~\ref{Sob}: the inequality part]
It suffices to work on one connected component of $\Sigma$: a
compact support meets only finitely many components, and their
estimates combine by the inclusion $\ell^p\subset\ell^{p^*}$.
Take a nonnegative $f\in C_c^\infty(\Sigma)$ with
$\int_\Sigma f^{p^*}\dd\vol_\Sigma=1$.
Let $K=\supp f$, and use the embeddings in
Lemma~\ref{Emb}. Put
\[
 N=n+m+k,\qquad \Sigma_\varepsilon=X_\varepsilon(\Sigma)\subset\R^N.
\]
Fix $L>0$, and let $Z_L$ and $\eta_L$ be defined by
\eqref{ZLML} and \eqref{Err}, respectively, for this fixed
dimension $N$. These quantities depend only on $n,p,N$ and $L$
and are therefore independent of $\varepsilon$. 

Let
\begin{equation*}
 A_\varepsilon=\left(\int_\Sigma f^{p^*}\dd\vol_{g_\varepsilon}\right)^{1/p^*},
 \qquad f_\varepsilon(X_\varepsilon(x))=\frac{f(x)}{A_\varepsilon}.
\end{equation*}
Then $f_\varepsilon\in C_c^\infty(\Sigma_\varepsilon)$ is
nonnegative and satisfies
$\|f_\varepsilon\|_{L^{p^*}(\Sigma_\varepsilon)}=1$. Applying Proposition~\ref{Tr} to
$(\Sigma_\varepsilon,f_\varepsilon)$ and pulling the resulting
inequality back to $\Sigma$ via $X_\varepsilon$, we obtain 
\begin{align} \label{ImmI}
 Z_L^{1/n}(1-\eta_L)
 \leq\frac{\AT(n,p)}{A_\varepsilon}
\left(\int_\Sigma|\nabla_{g_\varepsilon}f|^p\dd\vol_{g_\varepsilon}\right)^{1/p}+\frac{c_{n,p}^{-1/n}L}{nA_\varepsilon^\beta}
\int_\Sigma|H_\varepsilon|f^\beta\dd\vol_{g_\varepsilon}.
\end{align}
Smooth convergence of the metrics on $K$ implies
\begin{equation*}
 A_\varepsilon\longrightarrow1,\qquad
 \int_\Sigma|\nabla_{g_\varepsilon}f|^p\dd\vol_{g_\varepsilon}
 \longrightarrow\int_\Sigma|\nabla^\Sigma f|^p\dd\vol_\Sigma.
\end{equation*}
Since $X$ is minimal, \eqref{Hrate} also yields 
\[
 A_\varepsilon^{-\beta}
 \int_\Sigma|H_\varepsilon|f^\beta\dd\vol_{g_\varepsilon}
 \leq\frac{C_K\varepsilon}{A_\varepsilon^\beta}
       \int_K f^\beta\dd\vol_{g_\varepsilon}
 \longrightarrow0.
\]
For fixed $L$, letting $\varepsilon\downarrow0$ in \eqref{ImmI}, we obtain 
\[
 Z_L^{1/n}(1-\eta_L)
 \leq\AT(n,p)\|\nabla^\Sigma f\|_{L^p(\Sigma,g)}.
\]
Letting $L\to\infty$ and using \eqref{Lim}, we obtain
\eqref{SobI} for normalized nonnegative $f$.
By homogeneity, the same inequality holds for every nonnegative
$f\in C_c^\infty(\Sigma)$.

Finally, for an arbitrary real-valued $f\in C_c^\infty(\Sigma)$,
apply the inequality to
$f_h=\sqrt{f^2+h^2}-h\in C_c^\infty(\Sigma)$, $h>0$.
Since $0\le f_h\le|f|$, $f_h\to|f|$ and
$|\nabla^\Sigma f_h|\le|\nabla^\Sigma f|$,
dominated convergence yields \eqref{SobI} as $h\downarrow0$.
\end{proof}

\subsection{Sharpness and equality}

To prove sharpness, we use the following local argument.

\begin{proposition}\label{UP1}
Let $(\Sigma,g)$ be a nonempty smooth $n$-dimensional Riemannian manifold without
boundary, and let $1<p<n$. Then
\begin{equation*}
 \sup_{\substack{f\in C_c^\infty(\Sigma)\\\|\nabla^\Sigma f\|_{L^p}>0}}
 \frac{\|f\|_{L^{p^*}(\Sigma,g)}}{\|\nabla^\Sigma f\|_{L^p(\Sigma,g)}}
 \geq\AT(n,p).
\end{equation*}
Consequently, the constant in \eqref{SobI} is optimal
for every fixed manifold satisfying Theorem~\ref{Sob}.
\end{proposition}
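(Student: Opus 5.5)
We argue by concentration: transplant rescaled Aubin--Talenti bubbles into a small normal coordinate chart, where the metric is almost Euclidean. Fix a point $x_0\in\Sigma$ and normal coordinates $\phi:B_{\rho}^n\to\Sigma$ centred at $x_0$, with $\rho>0$ small. In these coordinates $g_{ij}(y)=\delta_{ij}+O(|y|^2)$. Consequently, for every $\epsilon>0$ there is $\rho_\epsilon>0$ such that on $B_{\rho_\epsilon}^n$ the metric satisfies $(1+\epsilon)^{-1}\delta\leq g\leq(1+\epsilon)\delta$ as quadratic forms.

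Fix $\epsilon$. By the Euclidean Aubin--Talenti theorem and density, there is $\varphi\in C_c^\infty(\R^n)$ with $\nabla\varphi\neq0$ and
\[
\|\varphi\|_{L^{p^*}(\R^n)}\geq(1-\epsilon)\AT(n,p)\|\nabla\varphi\|_{L^p(\R^n)}.
\]
For instance, take the extremal $(1+|z|^q)^{-(n-p)/p}$ and multiply by a cutoff. Since the extremal lies in $L^{p^*}$ and its gradient in $L^p$, cutting off at a large radius changes both norms by arbitrarily little. Suppose $\supp\varphi\subset B_R^n$. Set $\varphi_\lambda(y)=\varphi(y/\lambda)$ with $\lambda<\rho_\epsilon/R$. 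The ratio $\|\varphi_\lambda\|_{L^{p^*}}/\|\nabla\varphi_\lambda\|_{L^p}$ is dilation invariant, precisely because $1/p^*=1/p-1/n$. Define $f=\varphi_\lambda\circ\phi^{-1}$, extended by zero; then $f\in C_c^\infty(\Sigma)$.

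Next we compare norms. On the support, $\dd\vol_\Sigma=\sqrt{\det g}\,\dd y$ with $(1+\epsilon)^{-n/2}\leq\sqrt{\det g}\leq(1+\epsilon)^{n/2}$. Also $|\nabla^\Sigma f|_g^2=g^{ij}\partial_i f\partial_j f\leq(1+\epsilon)|\nabla\varphi_\lambda|^2$. Hence
\[
\|f\|_{L^{p^*}(\Sigma)}\geq(1+\epsilon)^{-n/(2p^*)}\|\varphi\|_{L^{p^*}(\R^n)}\,\lambda^{n/p^*},
\]
\[
\|\nabla^\Sigma f\|_{L^p(\Sigma)}\leq(1+\epsilon)^{1/2+n/(2p)}\|\nabla\varphi\|_{L^p(\R^n)}\,\lambda^{n/p-1}.
\]
In particular $\|\nabla^\Sigma f\|_{L^p}>0$, since $\nabla\varphi\neq0$ on an open set and $g$ is positive definite. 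The ratio is therefore at least $(1+\epsilon)^{-C(n,p)}(1-\epsilon)\AT(n,p)$. Letting $\epsilon\to0$ gives the supremum bound.

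For the consequence, suppose \eqref{SobI} held on some fixed $\Sigma$ with a constant $C<\AT(n,p)$. Then the supremum above would be at most $C$, a contradiction. So $\AT(n,p)$ cannot be improved. The only mildly delicate point is the uniform metric comparison on a small ball. It is routine once we note that smoothness of $g$ at $x_0$ suffices; no completeness or curvature bound enters, since everything is local.
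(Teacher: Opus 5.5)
Your proof is correct and takes essentially the paper's route: a near-extremal Euclidean $\varphi\in C_c^\infty(\R^n)$ is rescaled into a normal coordinate chart where the metric is almost Euclidean, and your explicit $(1+\epsilon)$-bounds on $\sqrt{\det g}$ and $g^{ij}$ replace the paper's uniform convergence $g_{kl}(\varepsilon z)\to\delta_{kl}$ on $\supp\varphi_j$. One point is left implicit in your ``for instance'': the cut-off extremal is not smooth at the origin unless $q$ is an even integer, so it must also be mollified, and the gradient error from the cut-off needs the annulus H\"older estimate that the paper writes out; your appeal to the Euclidean sharp constant on $C_c^\infty(\R^n)$ and to density already covers both.
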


\begin{proof}
Consider the Euclidean extremal $U(z)=(1+|z|^q)^{-(n-p)/p}$.
It belongs to $L^{p^*}(\R^n)$ and its gradient belongs to
$L^p(\R^n)$. Choose a cut-off function $\chi\in C_c^\infty(\R^n)$ such that 
$0\leq\chi\leq1$, $\chi=1$ on $B_1^n$, and
$\supp\chi\subset B_2^n$. Set $V_R(z)=\chi(z/R)U(z)$.
Then $V_R\to U$ in $L^{p^*}$ and $\nabla V_R\to\nabla U$ in
$L^p$ as $R\to\infty$. Indeed,
\[
\nabla V_R-\nabla U
=(\chi(z/R)-1)\nabla U
+R^{-1}U(z)\nabla\chi(z/R).
\]
The first term tends to zero in $L^p$, while H\"older's inequality for the second term yields 
\[
 R^{-p}\int_{\{R<|z|<2R\}}|U|^p\dd z
 \leq C\left(\int_{\{|z|>R\}}|U|^{p^*}\dd z\right)^{p/p^*}
 \longrightarrow0.
\]
For each fixed $R$, standard mollification approximates $V_R$
in $L^{p^*}$ and its gradient in $L^p$. A diagonal argument therefore gives nonzero nonnegative $\varphi_j\in C_c^\infty(\R^n)$ such that 
\begin{equation*}
	\frac{\|\varphi_j\|_{L^{p^*}(\R^n)}}
	{\|\nabla\varphi_j\|_{L^p(\R^n)}}
	\longrightarrow
	\frac{\|U\|_{L^{p^*}(\R^n)}}
	{\|\nabla U\|_{L^p(\R^n)}}
	=\AT(n,p).
\end{equation*} 

Fix $x_0\in\Sigma$ and choose normal coordinates
$y=\exp_{x_0}^{-1}(x)$ centered at $x_0$. For each fixed $j$
and all sufficiently small $\varepsilon>0$, so that
$\varepsilon\supp\varphi_j$ lies in the coordinate domain, define
\[
f_{\varepsilon,j}(x)
=\varepsilon^{-(n-p)/p}
\varphi_j\bigl(y/\varepsilon\bigr)
\]
in this coordinate neighbourhood and set it equal to zero
elsewhere. Then $f_{\varepsilon,j}\in C_c^\infty(\Sigma)$.
Since $g_{kl}(0)=\delta_{kl}$ and $\supp\varphi_j$ is compact, we have  $g_{kl}(\varepsilon z)\longrightarrow\delta_{kl}$
uniformly for $z\in\supp\varphi_j$. Consequently, the change of
variables $y=\varepsilon z$ yields, as $\varepsilon\downarrow0$,
\begin{align*}
	\int_\Sigma |f_{\varepsilon,j}|^{p^*}\dd\vol_\Sigma
	&\longrightarrow
	\int_{\R^n}|\varphi_j|^{p^*}\dd z,\\
	\int_\Sigma |\nabla^\Sigma f_{\varepsilon,j}|^p\dd\vol_\Sigma
	&\longrightarrow
	\int_{\R^n}|\nabla\varphi_j|^p\dd z.
\end{align*} 
Then letting $j\to\infty$
proves the lower bound. The last assertion follows from
\eqref{SobI}.
\end{proof}

\begin{proposition}\label{Nonatt}
Under the assumptions of Theorem~\ref{Sob}, no nonzero
function in $C_c^\infty(\Sigma)$ attains equality in
\eqref{SobI}.
\end{proposition}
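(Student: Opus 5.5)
The plan is to argue by contradiction, reducing to a nonnegative extremal, deriving its Euler--Lagrange equation, and then using the strong maximum principle for the $p$-Laplacian to show that the extremal cannot vanish anywhere on its connected component. This is incompatible with compact support, because no component of $\Sigma$ is compact.

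\emph{Reduction to a nonnegative extremal.} Suppose $f\in C_c^\infty(\Sigma)$ is nonzero and attains equality in \eqref{SobI}. Since $|\nabla^\Sigma|f||=|\nabla^\Sigma f|$ almost everywhere, the function $u=|f|$ is nonnegative, Lipschitz, compactly supported, and nonzero. It also attains equality in \eqref{SobI}, which extends to Lipschitz compactly supported functions by the approximation $f_h=\sqrt{f^2+h^2}-h$ and mollification. Restrict to a connected component $\Sigma_0$ of $\Sigma$ on which $u\not\equiv0$. The restriction still attains equality: by the inclusion $\ell^p\subset\ell^{p^*}$, equality for the sum over components forces equality, and indeed concentration, on a single component. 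So we may assume $u$ lives on $\Sigma_0$.

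\emph{Euler--Lagrange equation.} Inequality \eqref{SobI} holds for all $\varphi\in C_c^\infty(\Sigma_0)$, and by density for all compactly supported Lipschitz $\varphi$. Hence $t\mapsto \|\nabla^\Sigma(u+t\varphi)\|_p^p/\|u+t\varphi\|_{p^*}^p$ has a minimum at $t=0$. Differentiating at $t=0$ (both functionals are $C^1$ in $t$ for $p\ge2$) gives the weak equation
\[
-\operatorname{div}_\Sigma\bigl(|\nabla^\Sigma u|^{p-2}\nabla^\Sigma u\bigr)=\lambda u^{p^*-1},\qquad \lambda=\frac{\|\nabla^\Sigma u\|_p^p}{\|u\|_{p^*}^{p^*}}>0 .
\]
Since $u$ is bounded, the right-hand side is bounded. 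Local $C^{1,\gamma}$ regularity for the $p$-Laplacian (DiBenedetto, Tolksdorf), applied in local coordinates where the equation is a quasilinear equation with smooth coefficients, therefore gives $u\in C^{1,\gamma}_{\mathrm{loc}}$. Moreover $u$ is a nonnegative weak supersolution, $-\Delta_p u\ge0$.

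\emph{Strong maximum principle and noncompactness.} Vázquez's strong maximum principle for $-\Delta_p u\ge0$ with $u\ge0$ then gives $u>0$ on all of the connected manifold $\Sigma_0$. Since $u\not\equiv0$, this holds because the set $\{u=0\}$ is open by the local Harnack-type argument and closed by continuity. On the other hand, $\Sigma_0$ is noncompact: on a compact minimal immersion each coordinate function of $X$ would be harmonic ($\Delta_\Sigma X=H=0$), hence constant, which is impossible for an immersion of positive dimension. Thus $\operatorname{supp}u$ is a compact proper subset of $\Sigma_0$, so $u$ vanishes somewhere on $\Sigma_0$. This contradicts $u>0$.

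\emph{Main obstacle.} The delicate step is the analytic justification of the middle part: passing from the smooth class to the Lipschitz function $|f|$, differentiating the quotient, and invoking the regularity and strong maximum principle in the Riemannian, degenerate setting. All of these are local, so I would work in coordinate charts, where the operator is a quasilinear $p$-Laplace-type operator with smooth elliptic coefficients, and cite the Euclidean results chart by chart.
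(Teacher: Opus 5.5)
Your proof is correct, but it takes a much heavier route than the paper.

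The two arguments agree up to the Euler--Lagrange equation. Both reduce to the nonnegative extremal $u=|f|/\|f\|_{L^{p^*}}$, which needs \eqref{SobI} on compactly supported $W^{1,p}$ (or Lipschitz) functions, and both derive the same weak equation $-\operatorname{div}_\Sigma(|\nabla^\Sigma u|^{p-2}\nabla^\Sigma u)=\lambda u^{p^*-1}$ with $\lambda>0$.

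The paper then tests this equation with one function $\eta\in C_c^\infty(\Sigma)$ that equals $1$ on a neighbourhood of $\supp u$. Since $\nabla^\Sigma u$ and $\nabla^\Sigma\eta$ have disjoint supports, the left side vanishes. The right side is $\lambda\int_\Sigma u^{p^*-1}\dd\vol_\Sigma>0$. In effect, integrating the equation over $\Sigma$ is already contradictory for a compactly supported nonnegative $u\not\equiv0$.

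The paper's route therefore needs none of the following, all of which you rely on:
\begin{itemize}
\item $C^{1,\gamma}$ regularity;
\item Vázquez's strong maximum principle or a weak Harnack inequality for the Riemannian $p$-Laplacian;
\item reduction to a single connected component;
\item noncompactness of the components.
\end{itemize}
The analytic step you flag as the main obstacle simply does not arise.

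What your route buys is extra qualitative information: a nonnegative weak solution is either identically zero or strictly positive on each component. You also correctly invoke the noncompactness of components, which the paper itself records at the start of Section~\ref{App}. For the proposition as stated, though, this extra information is superfluous.

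A minor point: differentiability of the quotient along $u+t\varphi$ holds for every $p>1$, not only $p\geq2$.
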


\begin{proof}
By a standard smooth approximation in $W^{1,p}$,
\eqref{SobI} extends to all compactly supported
$W^{1,p}(\Sigma)$ functions. Suppose that a nonzero $f\in C_c^\infty(\Sigma)$ attains equality. By the Sobolev chain rule, replacing $f$ by $|f|$ preserves both its $L^{p^*}$-norm and its $p$-energy. Thus 
$u=|f|/\|f\|_{L^{p^*}(\Sigma,g)}$ is a nonnegative minimizer of the $p$-energy among compactly supported functions $v\in W^{1,p}(\Sigma)$ satisfying $\|v\|_{L^{p^*}(\Sigma)}=1$. Therefore, the first variation of the constrained energy gives 
\begin{equation}\label{EL}
 \int_\Sigma|\nabla^\Sigma u|^{p-2}\langle\nabla^\Sigma u,\nabla^\Sigma \eta\rangle\dd\vol_\Sigma
 =\lambda\int_\Sigma u^{p^*-1}\eta\dd\vol_\Sigma,
 \qquad\forall\,\eta\in C_c^\infty(\Sigma),
\end{equation}
where $\lambda=\int_\Sigma|\nabla^\Sigma u|^p\dd\vol_\Sigma=\AT(n,p)^{-p}>0$.
Choose $\eta$ equal to $1$ on a neighbourhood of $\supp u$.
The left side of \eqref{EL} vanishes, while the right
side equals $\lambda\int_\Sigma u^{p^*-1}\dd\vol_\Sigma>0$, a contradiction.
\end{proof}

\begin{remark}
By completion, \eqref{SobI} extends to the homogeneous
Sobolev space $\mathcal D^{1,p}(\Sigma)$ generated by $C_c^\infty(\Sigma)$
under the norm $\|\nabla^\Sigma f\|_{L^p(\Sigma,g)}$. On an affine $n$-plane,
the classical nonzero extremals are the Aubin--Talenti functions 
\[
 f(x)=C\bigl(a+|x-x_0|^q\bigr)^{-(n-p)/p},
\]
where $C\ne0$, $a>0$, and $x_0$ belongs to the plane; see
\cite{Aubin,Talenti}.  They belong to $\mathcal D^{1,p}(\R^n)$
but are not compactly supported. Proposition~\ref{Nonatt} only rules out compactly
supported smooth extremals; it does not address extremals in
$\mathcal D^{1,p}(\Sigma)$ for a general minimal immersion. 
\end{remark}

\section{Applications}\label{App}

Throughout this section, unqualified norms and integrals are taken
with respect to the induced Riemannian volume, and
$\|\cdot\|_s=\|\cdot\|_{L^s}$.
Every connected component of $\Sigma$ is noncompact: otherwise the
coordinate functions of the minimal immersion would be harmonic
on a compact manifold and hence constant.

\subsection{Sharp Gagliardo--Nirenberg inequalities}

We use the dimension-reduction argument described in
Bakry, Gentil and Ledoux~\cite{BGL}, in the form developed
further by Nguyen~\cite{Nguyen}, and apply it to the product of
the minimal immersion with a Euclidean space.

\begin{proof}[Proof of Corollary~\ref{GN}]
The map $(x,y)\mapsto(X(x),y)$ is a complete minimal immersion
of $\Sigma\times\R^k$ into $\R^{n+m+k}$, with the product metric.
Since $d=n+k\geq3$, Theorem~\ref{Sob}, applied in dimension $d$
with $p=2$, yields the sharp Sobolev inequality on this product.
By approximation and replacement of $f$ by $|f|$, it suffices to
consider a nonzero nonnegative compactly supported Lipschitz
function $f$. Put $a=1/(t-1)=(2n+k-4)/4$ and $b=2a+1$. For
$\lambda>0$, define
\[
 F_\lambda(x,y)
 =\bigl(\lambda f(x)^{-1/a}+|y|^2\bigr)^{-(d-2)/2}
 \quad\hbox{on }\{f>0\},
\]
and set $F_\lambda=0$ on $\{f=0\}$. Equivalently,
\[
 F_\lambda
 =f^{(d-2)/(2a)}
       (\lambda+|y|^2f^{1/a})^{-(d-2)/2}.
\]
Since $(d-2)/(2a)>1$, both $F_\lambda$ and its first derivatives tend to
zero as $f\downarrow0$. Hence its extension
by zero to $\{f=0\}$ is locally Lipschitz, and its weak gradient
equals the classical gradient on $\{f>0\}$ and vanishes almost
everywhere on $\{f=0\}$. Put $c=\pi^{k/2}\Gamma(n+k/2)/\Gamma(d)$. Note that for any $A>0$, 
\[
\int_{\R^k}(A+|y|^2)^{-d}\dd y
=cA^{-n-k/2},
\qquad
\int_{\R^k}|y|^2(A+|y|^2)^{-d}\dd y
=\frac{kc}{2b}A^{1-n-k/2}.
\]
Therefore, integrating in $y$ and using the definitions of $a$, $b$, and $t$, we obtain 
\[
 \int_{\Sigma\times\R^k}F_\lambda^{2d/(d-2)}\dd\vol_\Sigma(x)\dd y
 =c\lambda^{-n-k/2}\|f\|_{2t}^{2t}
\]
and
\[
 \int_{\Sigma\times\R^k}|\nabla F_\lambda|^2\dd\vol_\Sigma(x)\dd y
 =c(d-2)^2\left[
 \frac{\lambda^{2-n-k/2}}{4a^2}\|\nabla^\Sigma f\|_2^2
 +\frac{k\lambda^{1-n-k/2}}{2b}\|f\|_{t+1}^{t+1}\right].
\]

Since $n+k/2-2>0$, integrating in $y$ yields 
$F_\lambda\in L^2(\Sigma\times\R^k)$. By cutting off $F_\lambda$ in the $y$-variables and then using
smooth approximation, as in the proof of
Proposition~\ref{UP1}, we may pass to the limit in
Theorem~\ref{Sob} and obtain 
\begin{equation}\label{GNOpt}
 \|f\|_{2t}^{2t(d-2)/d}
 \leq\AT(d,2)^2c^{2/d}(d-2)^2
 \left[\frac{\lambda^{k/d}}{4a^2}\|\nabla^\Sigma f\|_2^2
       +\frac{k\lambda^{-n/d}}{2b}\|f\|_{t+1}^{t+1}\right].
\end{equation}
Since $\|\nabla^\Sigma f\|_2>0$, the right-hand side is minimized at $\lambda=\frac{2na^2}{b}
         \frac{\|f\|_{t+1}^{t+1}}{\|\nabla^\Sigma f\|_2^2}.$
Combining this and taking the power $d/[2t(d-2)]$
gives an inequality of the form \eqref{GNI}, with
\begin{equation}\label{GNSc}
 \frac1{2t}=\theta\left(\frac12-\frac1n\right)
                  +\frac{1-\theta}{t+1}.
\end{equation}

To identify its constant, take $\Sigma=\R^n$ and
$U(x)=(1+|x|^2)^{-a}$. For $\lambda=1$, the corresponding
$F_1(x,y)=(1+|x|^2+|y|^2)^{-(d-2)/2}$ is a Sobolev
extremizer on $\R^d$. Also $\frac{\int_{\R^n}|\nabla U|^2\dd x}
      {\int_{\R^n}U^{t+1} \dd x}=\frac{2na^2}{b},$
so $\lambda=1$ is the minimizing value in
\eqref{GNOpt}. Thus the constant obtained by optimization is the quotient
of the three norms of $U$. This comparison takes place in the homogeneous Sobolev space
$\mathcal D^{1,2}(\R^d)$. Finally,
\[
 \int_{\R^n} U^{t+1}\dd x=\pi^{n/2}\frac{\Gamma(b-n/2)}{\Gamma(b)},
 \qquad
 \frac{\int_{\R^n} U^{2t}\dd x}{\int_{\R^n} U^{t+1}\dd x}=\frac{b-n/2}{b},
\]
which gives the stated expression for $G_{n,t}$.

For optimality on a fixed manifold, let $C_\Sigma$ be any constant
for which \eqref{GNI} holds. Choose a cut-off function $\chi\in C_c^\infty(\R^n)$ such that 
$0\leq\chi\leq1$, $\chi=1$ on $B_1^n$, and
$\supp\chi\subset B_2^n$. Set $V_R(x)=\chi(x/R)U(x)$. Then $V_R\to U$ in $L^{t+1}\cap L^{2t}$. Since $4a=2n+k-4$, 
\[
 R^{-2}\int_{\{R<|x|<2R\}}U^2\dd x
 \leq C R^{n-2-4a}=C R^{2-d}\longrightarrow0.
\]
Hence $\nabla V_R\to\nabla U$ in $L^2$. For each fixed $R$, applying the normal coordinate rescaling argument as in the proof of
Proposition~\ref{UP1} to $V_R$, and using \eqref{GNSc}, we obtain 
\[
C_\Sigma\geq
\frac{\|V_R\|_{2t}}
{\|\nabla V_R\|_2^\theta\|V_R\|_{t+1}^{1-\theta}}.
\]
Letting $R\to\infty$ yields $C_\Sigma\geq G_{n,t}$.
Therefore $G_{n,t}$ is optimal on every fixed manifold. 
\end{proof}

\begin{remark}
	By completion, \eqref{GNI} extends to the completion of
	$C_c^\infty(\Sigma)$ with respect to the norm $\|\nabla^\Sigma f\|_{L^2(\Sigma)}
	+\|f\|_{L^{t+1}(\Sigma)}$. 
	On an affine $n$-plane $P$, the classical nonzero extremals are
	\[
	f(x)=C\bigl(\lambda+|x-x_0|^2\bigr)^{-1/(t-1)},
	\qquad
	C\ne0,\quad \lambda>0,\quad x_0\in P;
	\]
	see~\cite{DPD}. They belong to this completion but are not
	compactly supported. The cut-off argument above shows that the
	inequality on $C_c^\infty(P)$ has the same best constant.
\end{remark} 

\subsection{Beckmann duality and Green kernels}

By Theorem~\ref{Sob}, the inclusion
$\mathcal D^{1,p}(\Sigma)\hookrightarrow L^{p^*}(\Sigma)$ has norm
$\AT(n,p)$.

\begin{proof}[Proof of Corollary~\ref{Dual}]
Define
\[
 N(h)=\sup\left\{\int_\Sigma h\varphi\dd\vol_\Sigma:
 \varphi\in C_c^\infty(\Sigma),\ \|\nabla^\Sigma \varphi\|_p\leq1\right\}.
\]
Since $1/p^*+1/r=1$, by Theorem~\ref{Sob} and H\"older's inequality, we have  $N(h)\leq\AT(n,p)\|h\|_r$.
The functional
$\nabla^\Sigma \varphi\mapsto\int_\Sigma h\varphi\dd\vol_\Sigma$ on the gradient
subspace of $L^p(T\Sigma)$ has norm $N(h)$. By the Hahn--Banach theorem~\cite[Theorem~1.1]{Brezis}
and $L^p$ duality, there exists $Z\in L^q(T\Sigma)$ with
$-\operatorname{div}_{\Sigma}Z=h$ and $\|Z\|_q=N(h)$.
Conversely, by H\"older's inequality, every admissible field has norm at least $N(h)$.
Consequently,
\begin{equation*}
 \mathcal B_q(h)=\frac{N(h)^q}{q}
 \leq\frac{\AT(n,p)^q}{q}\|h\|_r^q.
\end{equation*}
Since the admissible set is convex and the functional
$Z\mapsto q^{-1}\int_\Sigma |Z|^q\dd\vol_\Sigma$ is strictly
convex for $q>1$, the minimizing field is unique up to equality
almost everywhere. Optimizing over scalar multiples of each test function also yields 
the dual formula
\begin{equation*}
 \mathcal B_q(h)=\sup_{\varphi\in C_c^\infty(\Sigma)}
 \left\{\int_\Sigma h\varphi\dd\vol_\Sigma
       -\frac1p\int_\Sigma|\nabla^\Sigma \varphi|^p\dd\vol_\Sigma\right\}.
\end{equation*}

To prove sharpness, fix a probability density
$\eta\in C_c^\infty(\Sigma)$ and vary only $\rho$ in
$h=\rho-\eta$. Choose $x_0\notin\supp\eta$ and normal coordinates about $x_0$.
Let $u\geq0$ be a nonzero smooth compactly supported function on
$\R^n$. By $L^{p^*}$ duality and smooth approximation, one can choose a nonnegative $f\in C_c^\infty(\R^n)$ with
$\|f\|_r=1$ such that $\int_{\R^n}fu\dd z$ is arbitrarily close
to $\|u\|_{p^*}$. 
For small $\varepsilon>0$, define
\[
 u_\varepsilon(\exp_{x_0}(\varepsilon z))
 =\varepsilon^{-(n-p)/p}u(z),\qquad
 f_\varepsilon(\exp_{x_0}(\varepsilon z))
 =\varepsilon^{-n/r}f(z),
\]
and extend both functions by zero. Their supports are disjoint
from $\supp\eta$. Put
$t_\varepsilon=\int_\Sigma f_\varepsilon\dd\vol_\Sigma$ and
$h_\varepsilon=f_\varepsilon-t_\varepsilon\eta$. By the change of variables
$x=\exp_{x_0}(\varepsilon z)$ in normal coordinates, and using
$n-n/r=(n-p)/p$, we obtain
\[
t_\varepsilon
=\varepsilon^{(n-p)/p}
\left(\int_{\R^n}f\dd z+o(1)\right),\qquad
\|f_\varepsilon\|_r\longrightarrow1,\qquad
\|\nabla^\Sigma u_\varepsilon\|_p
\longrightarrow\|\nabla u\|_p. 
\]
The disjointness of the supports then yields $\|h_\varepsilon\|_r^r
=\|f_\varepsilon\|_r^r+t_\varepsilon^r\|\eta\|_r^r
\longrightarrow1$. Moreover, 
\[
\liminf_{\varepsilon\downarrow0}N(h_\varepsilon)
\geq\lim_{\varepsilon\downarrow0}
\frac{\int_\Sigma h_\varepsilon u_\varepsilon\dd\vol_\Sigma}
{\|\nabla^\Sigma u_\varepsilon\|_p}
=\lim_{\varepsilon\downarrow0}
\frac{\int_\Sigma f_\varepsilon u_\varepsilon\dd\vol_\Sigma}
{\|\nabla^\Sigma u_\varepsilon\|_p}
=\frac{\int_{\R^n}fu\dd z}{\|\nabla u\|_p}.
\] 
Set $\rho_\varepsilon=f_\varepsilon/t_\varepsilon$.
Homogeneity shows that
\[
 \frac{\mathcal B_q(\rho_\varepsilon-\eta)}
      {\|\rho_\varepsilon-\eta\|_r^q}
 =\frac{\mathcal B_q(h_\varepsilon)}{\|h_\varepsilon\|_r^q}.
\]
Taking $u$ along the Euclidean Sobolev extremizing sequence as in the proof of Proposition~\ref{UP1}, and choosing $f$ as above for each $u$, proves the sharpness of \eqref{BI} with $\eta$ fixed. 

We now take $p=2$ and $r=2n/(n+2)$. By the Riesz representation theorem~\cite[Theorem~5.5]{Brezis},
for each $h\in L^r(\Sigma)$, there exists a unique
$Th\in\mathcal D^{1,2}(\Sigma)$ such that 
\begin{equation}\label{Riesz}
 \int_\Sigma\langle\nabla^\Sigma Th,\nabla^\Sigma u\rangle\dd\vol_\Sigma
 =\int_\Sigma hu\dd\vol_\Sigma,
 \qquad u\in\mathcal D^{1,2}(\Sigma).
\end{equation}
Thus $N(h)=\|\nabla^\Sigma Th\|_2$ and
$2\mathcal B_2(h)=\int_\Sigma hTh\dd\vol_\Sigma$. It follows that
\[
 \left|\int_\Sigma fTh\dd\vol_\Sigma\right|
 =\left|\int_\Sigma\langle\nabla^\Sigma Tf,\nabla^\Sigma Th\rangle\dd\vol_\Sigma\right|
 \leq\AT(n,2)^2\|f\|_r\|h\|_r.
\]
We identify $T$ with the minimal Green operator. The Sobolev
inequality yields 
\begin{equation*}
 \operatorname{cap}_2(K)
 \geq \AT(n,2)^{-2}\vol_\Sigma(K)^{(n-2)/n}>0 
\end{equation*}
for every compact set $K$ of positive volume. Here
$\operatorname{cap}_2(K)$ is the infimum of
$\int_\Sigma|\nabla^\Sigma \varphi|^2\dd\vol_\Sigma$ over
$\varphi\in C_c^\infty(\Sigma)$ with $\varphi\geq1$ near $K$.
Each component is therefore nonparabolic and has a finite minimal
Green kernel off the diagonal; see~\cite[Theorem~5.1]{Grigoryan}.
On different components we set $G_\Sigma=0$.

Let $\Omega_j$ be an increasing smooth relatively compact
exhaustion, and let $T_j$ be the Dirichlet Green operator on
$\Omega_j$. For $0\leq h\in C_c^\infty(\Sigma)$, extend $T_jh$ by
zero. In $\mathcal D^{1,2}(\Sigma)$, it is the orthogonal projection
of $Th$ onto $H_0^1(\Omega_j)$. The union of these subspaces is
dense, so $T_jh\to Th$ in $\mathcal D^{1,2}(\Sigma)$ and in
$L^{2n/(n-2)}(\Sigma)$. On the other hand, the Dirichlet Green kernels increase
to $G_\Sigma$; see~\cite[Section~4.2]{Grigoryan}.
Monotone convergence therefore identifies the limit obtained
above as
\[
Th(x)=\int_\Sigma G_\Sigma(x,y)h(y)\dd\vol_\Sigma(y)
\quad\text{for almost every }x\in\Sigma.
\]

To extend this representation to $L^r(\Sigma)$, we first
approximate nonnegative $f,h\in L^r(\Sigma)$ by nonnegative
functions in $C_c^\infty(\Sigma)$.
Fatou's lemma then gives the same bilinear bound for the
Green integral, which consequently defines a bounded operator
from $L^r(\Sigma)$ to $L^{2n/(n-2)}(\Sigma)$.
Since this operator agrees with $T$ on $C_c^\infty(\Sigma)$,
the two operators agree on all of $L^r(\Sigma)$ by density.
Finally, applying the bound to $|f|$ and $|h|$ shows that
the double integral in \eqref{HLS} is absolutely convergent
for arbitrary $f,h\in L^r(\Sigma)$ and yields \eqref{HLS}.

To prove sharpness, suppose that the quadratic estimate holds
with a constant $C$ for all nonnegative $h\in L^r(\Sigma)$.
For nonnegative $u\in C_c^\infty(\Sigma)$ and $h\in L^r(\Sigma)$,
\eqref{Riesz} and the Cauchy--Schwarz inequality give
\[
\int_\Sigma hu\dd\vol_\Sigma
\leq
\left(\int_\Sigma hTh\dd\vol_\Sigma\right)^{1/2}
\|\nabla^\Sigma u\|_2
\leq C^{1/2}\|h\|_r\|\nabla^\Sigma u\|_2.
\]
Taking the supremum over $h\geq0$ with $\|h\|_r\leq1$, we obtain
\[
\|u\|_{2n/(n-2)}
\leq C^{1/2}\|\nabla^\Sigma u\|_2.
\]
Combining this with the sharpness assertion in Theorem~\ref{Sob},
we conclude that $C\geq\AT(n,2)^2$, which proves the sharpness
of \eqref{HLS}.
\end{proof}

\begin{remark}
The divergence constraint in \eqref{Beck} is tested
against compactly supported functions. It does not impose zero
flux at infinity. The estimate is therefore stated for arbitrary
$h\in L^r(\Sigma)$, without a zero-mass assumption. The fixed-target
sharpness assertion concerns the same variational problem.
\end{remark} 

% \appendix
% \section{The Aubin--Talenti constant}

% Polar coordinates and the substitution $t=r^q$ give
% \[
%  c_{n,p}=n\omega_n\int_0^\infty r^{n-1}(1+r^q)^{-n}\dd r
%  =\frac{n\omega_n}{q}B(n/q,n/p),
% \]
% which is \eqref{c}. For
% $U(z)=(1+|z|^q)^{-(n-p)/p}$,
% \[
%  U^{p^*}=(1+|z|^q)^{-n},\qquad
%  |\nabla U|^p=\left(\frac{n-p}{p-1}\right)^p
%                |z|^q(1+|z|^q)^{-n}.
% \]
% Using \eqref{J},
% \[
%  \frac{\|U\|_{L^{p^*}(\R^n)}}{\|\nabla U\|_{L^p(\R^n)}}
%  =\frac{p-1}{n-p}c_{n,p}^{-1/n}\JE^{-1/p}
%  =\frac1n c_{n,p}^{-1/n}\JE^{1/q}.
% \]
% Finally, $\omega_n=\pi^{n/2}/\Gamma(n/2+1)$ and
% \[
%  \frac1n\JE^{1/q}
%  =n^{-1/p}\left(\frac{p-1}{n-p}\right)^{1/q}
% \]
% show that \eqref{ATJ} agrees with \eqref{AT}.


\begin{thebibliography}{99}

\bibitem{AldrigoBalogh}
P. Aldrigo and Z. M. Balogh,
\emph{P\'olya--Szeg\H{o} inequalities on submanifolds with small
total mean curvature},
J. Geom. Anal. \textbf{35} (2025), Paper No.~394.

\bibitem{Allard}
W. K. Allard,
\emph{On the first variation of a varifold},
Ann. of Math. (2) \textbf{95} (1972), 417--491.

\bibitem{Aubin}
T. Aubin,
\emph{Probl\`emes isop\'erim\'etriques et espaces de Sobolev},
J. Differential Geom. \textbf{11} (1976), 573--598.

\bibitem{BDKLog}
Z. M. Balogh, S. Don and A. Krist\'aly,
\emph{Sharp weighted log-Sobolev inequalities:
characterization of equality cases and applications},
Trans. Amer. Math. Soc. \textbf{377} (2024), no.~7, 5129--5163.

\bibitem{BGL}
D. Bakry, I. Gentil and M. Ledoux,
\emph{Analysis and Geometry of Markov Diffusion Operators},
Grundlehren der mathematischen Wissenschaften, vol.~348,
Springer, Cham, 2014.

\bibitem{BK}
Z. M. Balogh and A. Krist\'aly,
\emph{Logarithmic-Sobolev inequalities on non-compact Euclidean
submanifolds: sharpness and rigidity},
J. Eur. Math. Soc. (2026), published online first.

\bibitem{BKM}
Z. M. Balogh, A. Krist\'aly and \'A. Mester,
\emph{$L^p$-Sobolev inequalities on minimal submanifolds},
J. Lond. Math. Soc. (2) \textbf{113} (2026), no.~4, e70521.

\bibitem{Beckmann}
M. Beckmann,
\emph{A continuous model of transportation},
Econometrica \textbf{20} (1952), no.~4, 643--660.

\bibitem{BDGM}
E. Bombieri, E. De Giorgi and M. Miranda,
\emph{Una maggiorazione a priori relativa alle ipersuperfici
minimali non parametriche},
Arch. Rational Mech. Anal. \textbf{32} (1969), 255--267.

\bibitem{BrascoPetrache}
L. Brasco and M. Petrache,
\emph{A continuous model of transportation revisited},
J. Math. Sci. \textbf{196} (2014), no.~2, 119--137.

\bibitem{BrendleIsoperimetric}
S. Brendle,
\emph{The isoperimetric inequality for a minimal submanifold in Euclidean space},
J. Amer. Math. Soc. \textbf{34} (2021), 595--603.

\bibitem{BrendleLog}
S. Brendle,
\emph{The logarithmic Sobolev inequality for a submanifold in Euclidean space},
Comm. Pure Appl. Math. \textbf{75} (2022), 449--454.

\bibitem{BETransport}
S. Brendle and M. Eichmair,
\emph{Proof of the Michael--Simon--Sobolev inequality using optimal transport},
J. Reine Angew. Math. \textbf{804} (2023), 1--10.

\bibitem{BrendleSurvey}
S. Brendle,
\emph{Minimal hypersurfaces and geometric inequalities},
Ann. Fac. Sci. Toulouse Math. (6) \textbf{32} (2023),
no.~1, 179--201.

\bibitem{BENotices}
S. Brendle and M. Eichmair,
\emph{The isoperimetric inequality},
Notices Amer. Math. Soc. \textbf{71} (2024), no.~6, 708--713.

\bibitem{Brenier}
Y. Brenier,
\emph{Polar factorization and monotone rearrangement of
vector-valued functions},
Comm. Pure Appl. Math. \textbf{44} (1991), no.~4, 375--417.

\bibitem{Brezis}
H. Brezis,
\emph{Functional Analysis, Sobolev Spaces and Partial Differential
Equations},
Universitext, Springer, New York, 2011.

\bibitem{CabreMiraglio}
X. Cabr\'e and P. Miraglio,
\emph{Universal Hardy--Sobolev inequalities on hypersurfaces
of Euclidean space},
Commun. Contemp. Math. \textbf{24} (2022),
no.~5, Paper No.~2150063.

\bibitem{Castillon}
P. Castillon,
\emph{Submanifolds, isoperimetric inequalities and optimal transportation},
J. Funct. Anal. \textbf{259} (2010), no.~1, 79--103.

\bibitem{CorderoGaussian}
D. Cordero-Erausquin,
\emph{Some applications of mass transport to Gaussian-type
inequalities},
Arch. Ration. Mech. Anal. \textbf{161} (2002), no.~3, 257--269.

\bibitem{CNV}
D. Cordero-Erausquin, B. Nazaret and C. Villani,
\emph{A mass-transportation approach to sharp Sobolev and
Gagliardo--Nirenberg inequalities},
Adv. Math. \textbf{182} (2004), no.~2, 307--332.

\bibitem{DPD}
M. Del Pino and J. Dolbeault,
\emph{Best constants for Gagliardo--Nirenberg inequalities and
applications to nonlinear diffusions},
J. Math. Pures Appl. (9) \textbf{81} (2002), no.~9, 847--875.

\bibitem{Federer}
H. Federer,
\emph{Geometric Measure Theory},
Grundlehren der mathematischen Wissenschaften, vol.~153,
Springer, New York, 1969.

\bibitem{Grigoryan}
A. Grigor'yan,
\emph{Analytic and geometric background of recurrence and
non-explosion of the Brownian motion on Riemannian manifolds},
Bull. Amer. Math. Soc. (N.S.) \textbf{36} (1999), no.~2, 135--249.

\bibitem{HanXu}
B.-X. Han and Z.-F. Xu,
\emph{Optimal Transport and the ABP Method in Higher Codimension},
preprint (2026),
\href{https://arxiv.org/abs/2608.27248}{arXiv:2608.27248}.

\bibitem{HoffmanSpruck}
D. Hoffman and J. Spruck,
\emph{Sobolev and isoperimetric inequalities for Riemannian submanifolds},
Comm. Pure Appl. Math. \textbf{27} (1974), no.~6, 715--727.

\bibitem{Lee}
J. M. Lee,
\emph{Introduction to Smooth Manifolds}, 2nd ed.,
Graduate Texts in Mathematics, vol.~218,
Springer, New York, 2013.

\bibitem{LiebHLS}
E. H. Lieb,
\emph{Sharp constants in the Hardy--Littlewood--Sobolev and related
inequalities},
Ann. of Math. (2) \textbf{118} (1983), 349--374.

\bibitem{McCann}
R. J. McCann,
\emph{Existence and uniqueness of monotone measure-preserving maps},
Duke Math. J. \textbf{80} (1995), no.~2, 309--323.

\bibitem{McCannPass}
R. J. McCann and B. Pass,
\emph{Optimal transportation between unequal dimensions},
Arch. Ration. Mech. Anal. \textbf{238} (2020), no.~3, 1475--1520.

\bibitem{MichaelSimon}
J. H. Michael and L. M. Simon,
\emph{Sobolev and mean-value inequalities on generalized submanifolds
of $R^n$},
Comm. Pure Appl. Math. \textbf{26} (1973), 361--379.

\bibitem{MirandaSobolev}
M. Miranda,
\emph{Diseguaglianze di Sobolev sulle ipersuperfici minimali},
Rend. Sem. Mat. Univ. Padova \textbf{38} (1967), 69--79.

\bibitem{Nguyen}
V. H. Nguyen,
\emph{Sharp weighted Sobolev and Gagliardo--Nirenberg inequalities
on half-spaces via mass transport and consequences},
Proc. Lond. Math. Soc. (3) \textbf{111} (2015), no.~1, 127--148.

\bibitem{Bernstein}
R. L. Schilling, R. Song and Z. Vondra\v cek,
\emph{Bernstein Functions: Theory and Applications}, 2nd ed.,
De Gruyter Studies in Mathematics, vol.~37,
De Gruyter, Berlin, 2012.

\bibitem{Sun}
X.-D. Sun,
\emph{A note on isoperimetric inequality for a minimal submanifold
in Euclidean space},
Differential Geom. Appl. \textbf{102} (2026), Article~102331.

\bibitem{Talenti}
G. Talenti,
\emph{Best constant in Sobolev inequality},
Ann. Mat. Pura Appl. (4) \textbf{110} (1976), 353--372.

\bibitem{Villani}
C. Villani,
\emph{Topics in Optimal Transportation},
Graduate Studies in Mathematics, vol.~58,
American Mathematical Society, Providence, RI, 2003.

\bibitem{Wang}
K.-H. Wang,
\emph{Optimal transport approach to Michael--Simon--Sobolev inequalities
in manifolds with intermediate Ricci curvature lower bounds},
Ann. Global Anal. Geom. \textbf{65} (2024), no.~1, Paper No.~7.
\end{thebibliography}
\end{document}